\documentclass[a4paper,12pt]{article}
\usepackage{amsfonts}
\usepackage{amsmath}
\usepackage{graphicx}
\usepackage{amscd}
\usepackage{amssymb}
\usepackage{hyperref}
\usepackage{appendix}%
\setcounter{MaxMatrixCols}{30}
\providecommand{\U}[1]{\protect\rule{.1in}{.1in}}

\newtheorem{theorem}{Theorem}[section]

\newtheorem{corollary}[theorem]{Corollary}
\newtheorem{definition}[theorem]{Definition}
\newtheorem{example}[theorem]{Example}

\newtheorem{problem}[theorem]{Problem}
\newtheorem{remark}[theorem]{Remark}

\numberwithin{equation}{section}
\newenvironment{proof}[1][Proof]{\textbf{#1.} }{\ \rule{0.5em}{0.5em}}

%
%
%
 

\makeatletter
 
\def\diagram{\m@th\leftwidth=\z@ \rightwidth=\z@ \topheight=\z@
\botheight=\z@ \setbox\@picbox\hbox\bgroup}
 
\def\enddiagram{\egroup\wd\@picbox\rightwidth\unitlength
\ht\@picbox\topheight\unitlength \dp\@picbox\botheight\unitlength
\hskip\leftwidth\unitlength\box\@picbox}
 
\def\bfig{\begin{diagram}}
\def\efig{\end{diagram}}
\newcount\wideness \newcount\leftwidth \newcount\rightwidth
\newcount\highness \newcount\topheight \newcount\botheight
 
\def\ratchet#1#2{\ifnum#1<#2 \global #1=#2 \fi}
 
\def\putbox(#1,#2)#3{%
\horsize{\wideness}{#3} \divide\wideness by 2
{\advance\wideness by #1 \ratchet{\rightwidth}{\wideness}}
{\advance\wideness by -#1 \ratchet{\leftwidth}{\wideness}}
\vertsize{\highness}{#3} \divide\highness by 2
{\advance\highness by #2 \ratchet{\topheight}{\highness}}
{\advance\highness by -#2 \ratchet{\botheight}{\highness}}
\put(#1,#2){\makebox(0,0){$#3$}}}
 
\def\putlbox(#1,#2)#3{%
\horsize{\wideness}{#3}
{\advance\wideness by #1 \ratchet{\rightwidth}{\wideness}}
{\ratchet{\leftwidth}{-#1}}
\vertsize{\highness}{#3} \divide\highness by 2
{\advance\highness by #2 \ratchet{\topheight}{\highness}}
{\advance\highness by -#2 \ratchet{\botheight}{\highness}}
\put(#1,#2){\makebox(0,0)[l]{$#3$}}}
 
\def\putrbox(#1,#2)#3{%
\horsize{\wideness}{#3}
{\ratchet{\rightwidth}{#1}}
{\advance\wideness by -#1 \ratchet{\leftwidth}{\wideness}}
\vertsize{\highness}{#3} \divide\highness by 2
{\advance\highness by #2 \ratchet{\topheight}{\highness}}
{\advance\highness by -#2 \ratchet{\botheight}{\highness}}
\put(#1,#2){\makebox(0,0)[r]{$#3$}}}

\def\adjust[#1]{} 
 
\newcount \coefa
\newcount \coefb
\newcount \coefc
\newcount\tempcounta
\newcount\tempcountb
\newcount\tempcountc
\newcount\tempcountd
\newcount\xext
\newcount\yext
\newcount\xoff
\newcount\yoff
\newcount\gap%
\newcount\arrowtypea
\newcount\arrowtypeb
\newcount\arrowtypec
\newcount\arrowtyped
\newcount\arrowtypee
\newcount\height
\newcount\width
\newcount\xpos
\newcount\ypos
\newcount\run
\newcount\rise
\newcount\arrowlength
\newcount\halflength
\newcount\arrowtype
\newdimen\tempdimen
\newdimen\xlen
\newdimen\ylen
\newsavebox{\tempboxa}%
\newsavebox{\tempboxb}%
\newsavebox{\tempboxc}%
 
\newdimen\w@dth
 
\def\setw@dth#1#2{\setbox\z@\hbox{\m@th$#1$}\w@dth=\wd\z@
\setbox\@ne\hbox{\m@th$#2$}\ifnum\w@dth<\wd\@ne \w@dth=\wd\@ne \fi
\advance\w@dth by 1.2em}
 
 
\def\t@^#1_#2{\allowbreak\def\n@one{#1}\def\n@two{#2}\mathrel
{\setw@dth{#1}{#2}
\mathop{\hbox to \w@dth{\rightarrowfill}}\limits
\ifx\n@one\empty\else ^{\box\z@}\fi
\ifx\n@two\empty\else _{\box\@ne}\fi}}
\def\t@@^#1{\@ifnextchar_{\t@^{#1}}{\t@^{#1}_{}}}
\def\to{\@ifnextchar^{\t@@}{\t@@^{}}}
 
\def\t@left^#1_#2{\def\n@one{#1}\def\n@two{#2}\mathrel{\setw@dth{#1}{#2}
\mathop{\hbox to \w@dth{\leftarrowfill}}\limits
\ifx\n@one\empty\else ^{\box\z@}\fi
\ifx\n@two\empty\else _{\box\@ne}\fi}}
\def\t@@left^#1{\@ifnextchar_{\t@left^{#1}}{\t@left^{#1}_{}}}
\def\toleft{\@ifnextchar^{\t@@left}{\t@@left^{}}}
 
\def\two@^#1_#2{\allowbreak
\def\n@one{#1}\def\n@two{#2}\mathrel{\setw@dth{#1}{#2}
\mathop{\vcenter{\lineskip\z@\baselineskip\z@
                 \hbox to \w@dth{\rightarrowfill}%
                 \hbox to \w@dth{\rightarrowfill}}%
       }\limits
\ifx\n@one\empty\else ^{\box\z@}\fi
\ifx\n@two\empty\else _{\box\@ne}\fi}}
\def\tw@@^#1{\@ifnextchar _{\two@^{#1}}{\two@^{#1}_{}}}
\def\two{\@ifnextchar ^{\tw@@}{\tw@@^{}}}
 
\def\tofr@^#1_#2{\def\n@one{#1}\def\n@two{#2}\mathrel{\setw@dth{#1}{#2}
\mathop{\vcenter{\hbox to \w@dth{\rightarrowfill}\kern-1.7ex
                 \hbox to \w@dth{\leftarrowfill}}%
       }\limits
\ifx\n@one\empty\else ^{\box\z@}\fi
\ifx\n@two\empty\else _{\box\@ne}\fi}}
\def\t@fr@^#1{\@ifnextchar_ {\tofr@^{#1}}{\tofr@^{#1}_{}}}
\def\tofro{\@ifnextchar^ {\t@fr@}{\t@fr@^{}}}

\def\mon{\mathop{\m@th\hbox to
      14.6\P@{\lasyb\char'51\hskip-2.1\P@$\arrext$\hss
$\mathord\rightarrow$}}\limits} 
\def\leftmono{\mathrel{\m@th\hbox to
14.6\P@{$\mathord\leftarrow$\hss$\arrext$\hskip-2.1\P@\lasyb\char'50%
}}\limits} 
\mathchardef\arrext="0200       

\setlength{\unitlength}{.01em}%
\def\settypes(#1,#2,#3){\arrowtypea#1 \arrowtypeb#2 \arrowtypec#3}
\def\settoheight#1#2{\setbox\@tempboxa\hbox{#2}#1\ht\@tempboxa\relax}%
\def\settodepth#1#2{\setbox\@tempboxa\hbox{#2}#1\dp\@tempboxa\relax}%
\def\settokens`#1`#2`#3`#4`{%
     \def\tokena{#1}\def\tokenb{#2}\def\tokenc{#3}\def\tokend{#4}}
\def\setsqparms[#1`#2`#3`#4;#5`#6]{%
\arrowtypea #1
\arrowtypeb #2
\arrowtypec #3
\arrowtyped #4
\width #5
\height #6
}
\def\setpos(#1,#2){\xpos=#1 \ypos#2}

\def\settriparms[#1`#2`#3;#4]{\settripairparms[#1`#2`#3`1`1;#4]}%
 
\def\settripairparms[#1`#2`#3`#4`#5;#6]{%
\arrowtypea #1
\arrowtypeb #2
\arrowtypec #3
\arrowtyped #4
\arrowtypee #5
\width #6
\height #6
}
 
\def\resetparms{\settripairparms[1`1`1`1`1;500]\width 500}
 
\resetparms
 
\def\mvector(#1,#2)#3{
\put(0,0){\vector(#1,#2){#3}}%
\put(0,0){\vector(#1,#2){26}}%
}
\def\evector(#1,#2)#3{{
\arrowlength #3
\put(0,0){\vector(#1,#2){\arrowlength}}%
\advance \arrowlength by-30
\put(0,0){\vector(#1,#2){\arrowlength}}%
}}
 
\def\horsize#1#2{%
\settowidth{\tempdimen}{$#2$}%
#1=\tempdimen
\divide #1 by\unitlength
}
 
\def\vertsize#1#2{%
\settoheight{\tempdimen}{$#2$}%
#1=\tempdimen
\settodepth{\tempdimen}{$#2$}%
\advance #1 by\tempdimen
\divide #1 by\unitlength
}
 
\def\putvector(#1,#2)(#3,#4)#5#6{{%
\ifnum3<\arrowtype
\putdashvector(#1,#2)(#3,#4)#5\arrowtype
\else
\ifnum\arrowtype<-3
\putdashvector(#1,#2)(#3,#4)#5\arrowtype
\else
\xpos=#1
\ypos=#2
\run=#3
\rise=#4
\arrowlength=#5
\ifnum \arrowtype<0
    \ifnum \run=0
        \advance \ypos by-\arrowlength
    \else
        \tempcounta \arrowlength
        \multiply \tempcounta by\rise
        \divide \tempcounta by\run
        \ifnum\run>0
            \advance \xpos by\arrowlength
            \advance \ypos by\tempcounta
        \else
            \advance \xpos by-\arrowlength
            \advance \ypos by-\tempcounta
        \fi
    \fi
    \multiply \arrowtype by-1
    \multiply \rise by-1
    \multiply \run by-1
\fi
\ifcase \arrowtype
\or \put(\xpos,\ypos){\vector(\run,\rise){\arrowlength}}%
\or \put(\xpos,\ypos){\mvector(\run,\rise)\arrowlength}%
\or \put(\xpos,\ypos){\evector(\run,\rise){\arrowlength}}%
\fi\fi\fi
}}
 
\def\putsplitvector(#1,#2)#3#4{
\xpos #1
\ypos #2
\arrowtype #4
\halflength #3
\arrowlength #3
\gap 140
\advance \halflength by-\gap
\divide \halflength by2
\ifnum\arrowtype>0
   \ifcase \arrowtype
   \or \put(\xpos,\ypos){\line(0,-1){\halflength}}%
       \advance\ypos by-\halflength
       \advance\ypos by-\gap
       \put(\xpos,\ypos){\vector(0,-1){\halflength}}%
   \or \put(\xpos,\ypos){\line(0,-1)\halflength}%
       \put(\xpos,\ypos){\vector(0,-1)3}%
       \advance\ypos by-\halflength
       \advance\ypos by-\gap
       \put(\xpos,\ypos){\vector(0,-1){\halflength}}%
   \or \put(\xpos,\ypos){\line(0,-1)\halflength}%
       \advance\ypos by-\halflength
       \advance\ypos by-\gap
       \put(\xpos,\ypos){\evector(0,-1){\halflength}}%
   \fi
\else \arrowtype=-\arrowtype
   \ifcase\arrowtype
   \or \advance \ypos by-\arrowlength
       \put(\xpos,\ypos){\line(0,1){\halflength}}%
       \advance\ypos by\halflength
       \advance\ypos by\gap
       \put(\xpos,\ypos){\vector(0,1){\halflength}}%
   \or \advance \ypos by-\arrowlength
       \put(\xpos,\ypos){\line(0,1)\halflength}%
       \put(\xpos,\ypos){\vector(0,1)3}%
       \advance\ypos by\halflength
       \advance\ypos by\gap
       \put(\xpos,\ypos){\vector(0,1){\halflength}}%
   \or \advance \ypos by-\arrowlength
       \put(\xpos,\ypos){\line(0,1)\halflength}%
       \advance\ypos by\halflength
       \advance\ypos by\gap
       \put(\xpos,\ypos){\evector(0,1){\halflength}}%
   \fi
\fi
}
 
\def\putmorphism(#1)(#2,#3)[#4`#5`#6]#7#8#9{{%
\run #2
\rise #3
\ifnum\rise=0
  \puthmorphism(#1)[#4`#5`#6]{#7}{#8}#9%
\else\ifnum\run=0
  \putvmorphism(#1)[#4`#5`#6]{#7}{#8}#9%
\else
\setpos(#1)%
\arrowlength #7
\arrowtype #8
\ifnum\run=0
\else\ifnum\rise=0
\else
\ifnum\run>0
    \coefa=1
\else
   \coefa=-1
\fi
\ifnum\arrowtype>0
   \coefb=0
   \coefc=-1
\else
   \coefb=\coefa
   \coefc=1
   \arrowtype=-\arrowtype
\fi
\width=2
\multiply \width by\run
\divide \width by\rise
\ifnum \width<0  \width=-\width\fi
\advance\width by60
\if l#9 \width=-\width\fi
\putbox(\xpos,\ypos){#4}
{\multiply \coefa by\arrowlength
\advance\xpos by\coefa
\multiply \coefa by\rise
\divide \coefa by\run
\advance \ypos by\coefa
\putbox(\xpos,\ypos){#5} }%
{\multiply \coefa by\arrowlength
\divide \coefa by2
\advance \xpos by\coefa
\advance \xpos by\width
\multiply \coefa by\rise
\divide \coefa by\run
\advance \ypos by\coefa
\if l#9%
   \putrbox(\xpos,\ypos){#6}%
\else\if r#9%
   \putlbox(\xpos,\ypos){#6}%
\fi\fi }%
{\multiply \rise by-\coefc
\multiply \run by-\coefc
\multiply \coefb by\arrowlength
\advance \xpos by\coefb
\multiply \coefb by\rise
\divide \coefb by\run
\advance \ypos by\coefb
\multiply \coefc by70
\advance \ypos by\coefc
\multiply \coefc by\run
\divide \coefc by\rise
\advance \xpos by\coefc
\multiply \coefa by140
\multiply \coefa by\run
\divide \coefa by\rise
\advance \arrowlength by\coefa
\ifcase\arrowtype
\or \put(\xpos,\ypos){\vector(\run,\rise){\arrowlength}}%
\or \put(\xpos,\ypos){\mvector(\run,\rise){\arrowlength}}%
\or \put(\xpos,\ypos){\evector(\run,\rise){\arrowlength}}%
\fi}\fi\fi\fi\fi}}

\newcount\numbdashes \newcount\lengthdash \newcount\increment
 
\def\howmanydashes{
\numbdashes=\arrowlength \lengthdash=40
\divide\numbdashes by \lengthdash
\lengthdash=\arrowlength
\divide\lengthdash by \numbdashes
\increment=\lengthdash
\multiply\lengthdash by 3
\divide\lengthdash by 5
}
 
\def\putdashvector(#1)(#2,#3)#4#5{%
\ifnum#3=0 \putdashhvector(#1){#4}#5
\else
\ifnum#2=0
\putdashvvector(#1){#4}#5\fi\fi}
 
\def\putdashhvector(#1,#2)#3#4{{%
\arrowlength=#3 \howmanydashes
\multiput(#1,#2)(\increment,0){\numbdashes}%
{\vrule height .4pt width \lengthdash\unitlength}
\arrowtype=#4 \xpos=#1
\ifnum\arrowtype<0 \advance\arrowtype by 7 \fi
\ifcase\arrowtype
\or \advance\xpos by 10
    \put(\xpos,#2){\vector(-1,0){\lengthdash}}
    \advance\xpos by 40
    \put(\xpos,#2){\vector(-1,0){\lengthdash}}
\or \advance \xpos by 10
    \put(\xpos,#2){\vector(-1,0){\lengthdash}}
    \advance\xpos by  \arrowlength
    \advance\xpos by  -50
    \put(\xpos,#2){\vector(-1,0){\lengthdash}}
\or \advance\xpos by 10
    \put(\xpos,#2){\vector(-1,0){\lengthdash}}
\or \advance\xpos by \arrowlength
    \advance\xpos by -\lengthdash
    \put(\xpos,#2){\vector(1,0){\lengthdash}}
\or {\advance\xpos by 10
    \put(\xpos,#2){\vector(1,0){\lengthdash}}}
    \advance\xpos by \arrowlength
    \advance\xpos by -\lengthdash
    \put(\xpos,#2){\vector(1,0){\lengthdash}}
\or \advance\xpos by \arrowlength
    \advance\xpos by -\lengthdash
    \put(\xpos,#2){\vector(1,0){\lengthdash}}
    \advance\xpos by -40
    \put(\xpos,#2){\vector(1,0){\lengthdash}}
   \fi
}}
 
\def\putdashvvector(#1,#2)#3#4{{%
\arrowlength=#3 \howmanydashes
\ypos=#2 \advance\ypos by -\arrowlength
\multiput(#1,#2)(0,\increment){\numbdashes}%
    {\vrule width .4pt height \lengthdash\unitlength}
\arrowtype=#4 \ypos=#2
\ifnum\arrowtype<0 \advance\arrowtype by 7 \fi
\ifcase\arrowtype
\or \advance\ypos by \arrowlength \advance\ypos by -40
    \put(#1,\ypos){\vector(0,1){\lengthdash}}
    \advance\ypos by -40
    \put(#1,\ypos){\vector(0,1){\lengthdash}}
\or \advance\ypos by 10
    \put(#1,\ypos){\vector(0,1){\lengthdash}}
    \advance\ypos by \arrowlength \advance\ypos by -40
    \put(#1,\ypos){\vector(0,1){\lengthdash}}
\or \advance\ypos by \arrowlength \advance\ypos by -40
    \put(#1,\ypos){\vector(0,1){\lengthdash}}
\or \advance\ypos by 10
    \put(#1,\ypos){\vector(0,-1){\lengthdash}}
\or \advance\ypos by 10
    \put(#1,\ypos){\vector(0,-1){\lengthdash}}
    \advance\ypos by \arrowlength \advance\ypos by -40
    \put(#1,\ypos){\vector(0,-1){\lengthdash}}
\or \advance\ypos by 10
    \put(#1,\ypos){\vector(0,-1){\lengthdash}}
    \advance\ypos by 40
    \put(#1,\ypos){\vector(0,-1){\lengthdash}}
\fi
}}
 
\def\puthmorphism(#1,#2)[#3`#4`#5]#6#7#8{{%
\xpos #1
\ypos #2
\width #6
\arrowlength #6
\arrowtype=#7
\putbox(\xpos,\ypos){#3\vphantom{#4}}%
{\advance \xpos by\arrowlength
\putbox(\xpos,\ypos){\vphantom{#3}#4}}%
\horsize{\tempcounta}{#3}%
\horsize{\tempcountb}{#4}%
\divide \tempcounta by2
\divide \tempcountb by2
\advance \tempcounta by30
\advance \tempcountb by30
\advance \xpos by\tempcounta
\advance \arrowlength by-\tempcounta
\advance \arrowlength by-\tempcountb
\putvector(\xpos,\ypos)(1,0)\arrowlength\arrowtype
\divide \arrowlength by2
\advance \xpos by\arrowlength
\vertsize{\tempcounta}{#5}%
\divide\tempcounta by2
\advance \tempcounta by20
\if a#8 %
   \advance \ypos by\tempcounta
   \putbox(\xpos,\ypos){#5}%
\else
   \advance \ypos by-\tempcounta
   \putbox(\xpos,\ypos){#5}%
\fi}}
 
\def\putvmorphism(#1,#2)[#3`#4`#5]#6#7#8{{%
\xpos #1
\ypos #2
\arrowlength #6
\arrowtype #7
\settowidth{\xlen}{$#5$}%
\putbox(\xpos,\ypos){#3}%
{\advance \ypos by-\arrowlength
\putbox(\xpos,\ypos){#4}}%
{\advance\arrowlength by-140
\advance \ypos by-70
\ifdim\xlen>0pt
   \if m#8%
      \putsplitvector(\xpos,\ypos)\arrowlength\arrowtype
   \else
   \putvector(\xpos,\ypos)(0,-1)\arrowlength\arrowtype
   \fi
\else
   \putvector(\xpos,\ypos)(0,-1)\arrowlength\arrowtype
\fi}%
\ifdim\xlen>0pt
   \divide \arrowlength by2
   \advance\ypos by-\arrowlength
   \if l#8%
      \advance \xpos by-40
      \putrbox(\xpos,\ypos){#5}%
   \else\if r#8%
      \advance \xpos by40
      \putlbox(\xpos,\ypos){#5}%
   \else
      \putbox(\xpos,\ypos){#5}%
   \fi\fi
\fi
}}
 
\def\putsquarep<#1>(#2)[#3;#4`#5`#6`#7]{{%
\setsqparms[#1]%
\setpos(#2)%
\settokens`#3`%
\puthmorphism(\xpos,\ypos)[\tokenc`\tokend`{#7}]{\width}{\arrowtyped}b%
\advance\ypos by \height
\puthmorphism(\xpos,\ypos)[\tokena`\tokenb`{#4}]{\width}{\arrowtypea}a%
\putvmorphism(\xpos,\ypos)[``{#5}]{\height}{\arrowtypeb}l%
\advance\xpos by \width
\putvmorphism(\xpos,\ypos)[``{#6}]{\height}{\arrowtypec}r%
}}
 
\def\putsquare{\@ifnextchar <{\putsquarep}{\putsquarep%
   <\arrowtypea`\arrowtypeb`\arrowtypec`\arrowtyped;\width`\height>}}
\def\square{\@ifnextchar< {\squarep}{\squarep
   <\arrowtypea`\arrowtypeb`\arrowtypec`\arrowtyped;\width`\height>}}
\def\squarep<#1>[#2`#3`#4`#5;#6`#7`#8`#9]{{
\setsqparms[#1]
\diagram
\putsquarep<\arrowtypea`\arrowtypeb`\arrowtypec`
\arrowtyped;\width`\height>
(0,0)[#2`#3`#4`{#5};#6`#7`#8`{#9}]
\enddiagram
}}                                                 
\def\putptrianglep<#1>(#2,#3)[#4`#5`#6;#7`#8`#9]{{%
\settriparms[#1]%
\xpos=#2 \ypos=#3
\advance\ypos by \height
\puthmorphism(\xpos,\ypos)[#4`#5`{#7}]{\height}{\arrowtypea}a%
\putvmorphism(\xpos,\ypos)[`#6`{#8}]{\height}{\arrowtypeb}l%
\advance\xpos by\height
\putmorphism(\xpos,\ypos)(-1,-1)[``{#9}]{\height}{\arrowtypec}r%
}}
 
\def\putptriangle{\@ifnextchar <{\putptrianglep}{\putptrianglep
   <\arrowtypea`\arrowtypeb`\arrowtypec;\height>}}
\def\ptriangle{\@ifnextchar <{\ptrianglep}{\ptrianglep
   <\arrowtypea`\arrowtypeb`\arrowtypec;\height>}}
\def\ptrianglep<#1>[#2`#3`#4;#5`#6`#7]{{
\settriparms[#1]
\diagram
\putptrianglep<\arrowtypea`\arrowtypeb`
\arrowtypec;\height>
(0,0)[#2`#3`#4;#5`#6`{#7}]
\enddiagram
}}                                            
 
\def\putqtrianglep<#1>(#2,#3)[#4`#5`#6;#7`#8`#9]{{%
\settriparms[#1]%
\xpos=#2 \ypos=#3
\advance\ypos by\height
\puthmorphism(\xpos,\ypos)[#4`#5`{#7}]{\height}{\arrowtypea}a%
\putmorphism(\xpos,\ypos)(1,-1)[``{#8}]{\height}{\arrowtypeb}l%
\advance\xpos by\height
\putvmorphism(\xpos,\ypos)[`#6`{#9}]{\height}{\arrowtypec}r%
}}
 
\def\putqtriangle{\@ifnextchar <{\putqtrianglep}{\putqtrianglep
   <\arrowtypea`\arrowtypeb`\arrowtypec;\height>}}
\def\qtriangle{\@ifnextchar <{\qtrianglep}{\qtrianglep
   <\arrowtypea`\arrowtypeb`\arrowtypec;\height>}}
\def\qtrianglep<#1>[#2`#3`#4;#5`#6`#7]{{
\settriparms[#1]
\width=\height                                
\diagram
\putqtrianglep<\arrowtypea`\arrowtypeb`
\arrowtypec;\height>
(0,0)[#2`#3`#4;#5`#6`{#7}]
\enddiagram
}}
 
\def\putdtrianglep<#1>(#2,#3)[#4`#5`#6;#7`#8`#9]{{%
\settriparms[#1]%
\xpos=#2 \ypos=#3
\puthmorphism(\xpos,\ypos)[#5`#6`{#9}]{\height}{\arrowtypec}b%
\advance\xpos by \height \advance\ypos by\height
\putmorphism(\xpos,\ypos)(-1,-1)[``{#7}]{\height}{\arrowtypea}l%
\putvmorphism(\xpos,\ypos)[#4``{#8}]{\height}{\arrowtypeb}r%
}}
 
\def\putdtriangle{\@ifnextchar <{\putdtrianglep}{\putdtrianglep
   <\arrowtypea`\arrowtypeb`\arrowtypec;\height>}}
\def\dtriangle{\@ifnextchar <{\dtrianglep}{\dtrianglep
   <\arrowtypea`\arrowtypeb`\arrowtypec;\height>}}
\def\dtrianglep<#1>[#2`#3`#4;#5`#6`#7]{{
\settriparms[#1]
\width=\height                                
\diagram
\putdtrianglep<\arrowtypea`\arrowtypeb`
\arrowtypec;\height>
(0,0)[#2`#3`#4;#5`#6`{#7}]
\enddiagram
}}
 
\def\putbtrianglep<#1>(#2,#3)[#4`#5`#6;#7`#8`#9]{{%
\settriparms[#1]%
\xpos=#2 \ypos=#3
\puthmorphism(\xpos,\ypos)[#5`#6`{#9}]{\height}{\arrowtypec}b%
\advance\ypos by\height
\putmorphism(\xpos,\ypos)(1,-1)[``{#8}]{\height}{\arrowtypeb}r%
\putvmorphism(\xpos,\ypos)[#4``{#7}]{\height}{\arrowtypea}l%
}}
 
\def\putbtriangle{\@ifnextchar <{\putbtrianglep}{\putbtrianglep
   <\arrowtypea`\arrowtypeb`\arrowtypec;\height>}}
\def\btriangle{\@ifnextchar <{\btrianglep}{\btrianglep
   <\arrowtypea`\arrowtypeb`\arrowtypec;\height>}}
\def\btrianglep<#1>[#2`#3`#4;#5`#6`#7]{{
\settriparms[#1]
\width=\height                               
\diagram
\putbtrianglep<\arrowtypea`\arrowtypeb`
\arrowtypec;\height>
(0,0)[#2`#3`#4;#5`#6`{#7}]
\enddiagram
}}
 
\def\putAtrianglep<#1>(#2,#3)[#4`#5`#6;#7`#8`#9]{{%
\settriparms[#1]%
\xpos=#2 \ypos=#3
{\multiply \height by2
\puthmorphism(\xpos,\ypos)[#5`#6`{#9}]{\height}{\arrowtypec}b}%
\advance\xpos by\height \advance\ypos by\height
\putmorphism(\xpos,\ypos)(-1,-1)[#4``{#7}]{\height}{\arrowtypea}l%
\putmorphism(\xpos,\ypos)(1,-1)[``{#8}]{\height}{\arrowtypeb}r%
}}
 
\def\putAtriangle{\@ifnextchar <{\putAtrianglep}{\putAtrianglep
   <\arrowtypea`\arrowtypeb`\arrowtypec;\height>}}
\def\Atriangle{\@ifnextchar <{\Atrianglep}{\Atrianglep
   <\arrowtypea`\arrowtypeb`\arrowtypec;\height>}}
\def\Atrianglep<#1>[#2`#3`#4;#5`#6`#7]{{
\settriparms[#1]
\width=\height                                     
\diagram
\putAtrianglep<\arrowtypea`\arrowtypeb`
\arrowtypec;\height>
(0,0)[#2`#3`#4;#5`#6`{#7}]
\enddiagram
}}
 
\def\putAtrianglepairp<#1>(#2)[#3;#4`#5`#6`#7`#8]{{%
\settripairparms[#1]%
\setpos(#2)%
\settokens`#3`%
\puthmorphism(\xpos,\ypos)[\tokenb`\tokenc`{#7}]{\height}{\arrowtyped}b%
\advance\xpos by\height
\puthmorphism(\xpos,\ypos)[\phantom{\tokenc}`\tokend`{#8}]%
{\height}{\arrowtypee}b%
\advance\ypos by\height
\putmorphism(\xpos,\ypos)(-1,-1)[\tokena``{#4}]{\height}{\arrowtypea}l%
\putvmorphism(\xpos,\ypos)[``{#5}]{\height}{\arrowtypeb}m%
\putmorphism(\xpos,\ypos)(1,-1)[``{#6}]{\height}{\arrowtypec}r%
}}
 
\def\putAtrianglepair{\@ifnextchar <{\putAtrianglepairp}{\putAtrianglepairp%
   <\arrowtypea`\arrowtypeb`\arrowtypec`\arrowtyped`\arrowtypee;\height>}}
\def\Atrianglepair{\@ifnextchar <{\Atrianglepairp}{\Atrianglepairp%
   <\arrowtypea`\arrowtypeb`\arrowtypec`\arrowtyped`\arrowtypee;\height>}}
 
\def\Atrianglepairp<#1>[#2;#3`#4`#5`#6`#7]{{
\settripairparms[#1]
\settokens`#2`
\width=\height                                
\diagram
\putAtrianglepairp                            
<\arrowtypea`\arrowtypeb`\arrowtypec`
\arrowtyped`\arrowtypee;\height>
(0,0)[{#2};#3`#4`#5`#6`{#7}]
\enddiagram
}}
 
\def\putVtrianglep<#1>(#2,#3)[#4`#5`#6;#7`#8`#9]{{%
\settriparms[#1]%
\xpos=#2 \ypos=#3
\advance\ypos by\height
{\multiply\height by2
\puthmorphism(\xpos,\ypos)[#4`#5`{#7}]{\height}{\arrowtypea}a}%
\putmorphism(\xpos,\ypos)(1,-1)[`#6`{#8}]{\height}{\arrowtypeb}l%
\advance\xpos by\height
\advance\xpos by\height
\putmorphism(\xpos,\ypos)(-1,-1)[``{#9}]{\height}{\arrowtypec}r%
}}
 
\def\putVtriangle{\@ifnextchar <{\putVtrianglep}{\putVtrianglep
   <\arrowtypea`\arrowtypeb`\arrowtypec;\height>}}
\def\Vtriangle{\@ifnextchar <{\Vtrianglep}{\Vtrianglep
   <\arrowtypea`\arrowtypeb`\arrowtypec;\height>}}
\def\Vtrianglep<#1>[#2`#3`#4;#5`#6`#7]{{
\settriparms[#1]
\width=\height                                 
\diagram
\putVtrianglep<\arrowtypea`\arrowtypeb`
\arrowtypec;\height>
(0,0)[#2`#3`#4;#5`#6`{#7}]
\enddiagram
}}
 
\def\putVtrianglepairp<#1>(#2)[#3;#4`#5`#6`#7`#8]{{
\settripairparms[#1]%
\setpos(#2)%
\settokens`#3`%
\advance\ypos by\height
\putmorphism(\xpos,\ypos)(1,-1)[`\tokend`{#6}]{\height}{\arrowtypec}l%
\puthmorphism(\xpos,\ypos)[\tokena`\tokenb`{#4}]{\height}{\arrowtypea}a%
\advance\xpos by\height
\puthmorphism(\xpos,\ypos)[\phantom{\tokenb}`\tokenc`{#5}]%
{\height}{\arrowtypeb}a%
\putvmorphism(\xpos,\ypos)[``{#7}]{\height}{\arrowtyped}m%
\advance\xpos by\height
\putmorphism(\xpos,\ypos)(-1,-1)[``{#8}]{\height}{\arrowtypee}r%
}}
 
\def\putVtrianglepair{\@ifnextchar <{\putVtrianglepairp}{\putVtrianglepairp%
    <\arrowtypea`\arrowtypeb`\arrowtypec`\arrowtyped`\arrowtypee;\height>}}
\def\Vtrianglepair{\@ifnextchar <{\Vtrianglepairp}{\Vtrianglepairp%
    <\arrowtypea`\arrowtypeb`\arrowtypec`\arrowtyped`\arrowtypee;\height>}}
\def\Vtrianglepairp<#1>[#2;#3`#4`#5`#6`#7]{{
\settripairparms[#1]
\settokens`#2`
\diagram
\putVtrianglepairp                             
<\arrowtypea`\arrowtypeb`\arrowtypec`
\arrowtyped`\arrowtypee;\height>
(0,0)[{#2};#3`#4`#5`#6`{#7}]
\enddiagram
}}

\def\putCtrianglep<#1>(#2,#3)[#4`#5`#6;#7`#8`#9]{{%
\settriparms[#1]%
\xpos=#2 \ypos=#3
\advance\ypos by\height
\putmorphism(\xpos,\ypos)(1,-1)[``{#9}]{\height}{\arrowtypec}l%
\advance\xpos by\height
\advance\ypos by\height
\putmorphism(\xpos,\ypos)(-1,-1)[#4`#5`{#7}]{\height}{\arrowtypea}l%
{\multiply\height by 2
\putvmorphism(\xpos,\ypos)[`#6`{#8}]{\height}{\arrowtypeb}r}%
}}
 
\def\putCtriangle{\@ifnextchar <{\putCtrianglep}{\putCtrianglep
    <\arrowtypea`\arrowtypeb`\arrowtypec;\height>}}
\def\Ctriangle{\@ifnextchar <{\Ctrianglep}{\Ctrianglep
    <\arrowtypea`\arrowtypeb`\arrowtypec;\height>}}
\def\Ctrianglep<#1>[#2`#3`#4;#5`#6`#7]{{
\settriparms[#1]
\width=\height                               
\diagram
\putCtrianglep<\arrowtypea`\arrowtypeb`
\arrowtypec;\height>
(0,0)[#2`#3`#4;#5`#6`{#7}]
\enddiagram
}}                                           
\def\putDtrianglep<#1>(#2,#3)[#4`#5`#6;#7`#8`#9]{{%
\settriparms[#1]%
\xpos=#2 \ypos=#3
\advance\xpos by\height \advance\ypos by\height
\putmorphism(\xpos,\ypos)(-1,-1)[``{#9}]{\height}{\arrowtypec}r%
\advance\xpos by-\height \advance\ypos by\height
\putmorphism(\xpos,\ypos)(1,-1)[`#5`{#8}]{\height}{\arrowtypeb}r%
{\multiply\height by 2
\putvmorphism(\xpos,\ypos)[#4`#6`{#7}]{\height}{\arrowtypea}l}%
}}
 
\def\putDtriangle{\@ifnextchar <{\putDtrianglep}{\putDtrianglep
    <\arrowtypea`\arrowtypeb`\arrowtypec;\height>}}
\def\Dtriangle{\@ifnextchar <{\Dtrianglep}{\Dtrianglep
   <\arrowtypea`\arrowtypeb`\arrowtypec;\height>}}
\def\Dtrianglep<#1>[#2`#3`#4;#5`#6`#7]{{
\settriparms[#1]
\width=\height                              
\diagram
\putDtrianglep<\arrowtypea`\arrowtypeb`
\arrowtypec;\height>
(0,0)[#2`#3`#4;#5`#6`{#7}]
\enddiagram
}}                                          
\def\setrecparms[#1`#2]{\width=#1 \height=#2}%
 
\def\recursep<#1`#2>[#3;#4`#5`#6`#7`#8]{{\m@th
\width=#1 \height=#2
\settokens`#3`
\settowidth{\tempdimen}{$\tokena$}
\ifdim\tempdimen=0pt
  \savebox{\tempboxa}{\hbox{$\tokenb$}}%
  \savebox{\tempboxb}{\hbox{$\tokend$}}%
  \savebox{\tempboxc}{\hbox{$#6$}}%
\else
  \savebox{\tempboxa}{\hbox{$\hbox{$\tokena$}\times\hbox{$\tokenb$}$}}%
  \savebox{\tempboxb}{\hbox{$\hbox{$\tokena$}\times\hbox{$\tokend$}$}}%
  \savebox{\tempboxc}{\hbox{$\hbox{$\tokena$}\times\hbox{$#6$}$}}%
\fi
\ypos=\height
\divide\ypos by 2
\xpos=\ypos
\advance\xpos by \width
\bfig
\putCtrianglep<-1`1`1;\ypos>(0,0)[`\tokenc`;#5`#6`{#7}]%
\puthmorphism(\ypos,0)[\tokend`\usebox{\tempboxb}`{#8}]{\width}{-1}b%
\puthmorphism(\ypos,\height)[\tokenb`\usebox{\tempboxa}`{#4}]{\width}{-1}a%
\advance\ypos by \width
\putvmorphism(\ypos,\height)[``\usebox{\tempboxc}]{\height}1r%
\efig
}}
 
\def\recurse{\@ifnextchar <{\recursep}{\recursep<\width`\height>}}
 
\def\puttwohmorphisms(#1,#2)[#3`#4;#5`#6]#7#8#9{{%
%
\puthmorphism(#1,#2)[#3`#4`]{#7}0a
\ypos=#2
\advance\ypos by 20
\puthmorphism(#1,\ypos)[\phantom{#3}`\phantom{#4}`#5]{#7}{#8}a
\advance\ypos by -40
\puthmorphism(#1,\ypos)[\phantom{#3}`\phantom{#4}`#6]{#7}{#9}b
}}
 
\def\puttwovmorphisms(#1,#2)[#3`#4;#5`#6]#7#8#9{{%
%
%
\putvmorphism(#1,#2)[#3`#4`]{#7}0a
\xpos=#1
\advance\xpos by -20
\putvmorphism(\xpos,#2)[\phantom{#3}`\phantom{#4}`#5]{#7}{#8}l
\advance\xpos by 40
\putvmorphism(\xpos,#2)[\phantom{#3}`\phantom{#4}`#6]{#7}{#9}r
}}
 
\def\puthcoequalizer(#1)[#2`#3`#4;#5`#6`#7]#8#9{{%
%
\setpos(#1)%
\puttwohmorphisms(\xpos,\ypos)[#2`#3;#5`#6]{#8}11%
\advance\xpos by #8
\puthmorphism(\xpos,\ypos)[\phantom{#3}`#4`#7]{#8}1{#9}
}}
 
\def\putvcoequalizer(#1)[#2`#3`#4;#5`#6`#7]#8#9{{%
%
%
\setpos(#1)%
\puttwovmorphisms(\xpos,\ypos)[#2`#3;#5`#6]{#8}11%
\advance\ypos by -#8
\putvmorphism(\xpos,\ypos)[\phantom{#3}`#4`#7]{#8}1{#9}
}}
 
\def\putthreehmorphisms(#1)[#2`#3;#4`#5`#6]#7(#8)#9{{%
\setpos(#1) \settypes(#8)
\if a#9 %
     \vertsize{\tempcounta}{#5}%
     \vertsize{\tempcountb}{#6}%
     \ifnum \tempcounta<\tempcountb \tempcounta=\tempcountb \fi
\else
     \vertsize{\tempcounta}{#4}%
     \vertsize{\tempcountb}{#5}%
     \ifnum \tempcounta<\tempcountb \tempcounta=\tempcountb \fi
\fi
\advance \tempcounta by 60
\puthmorphism(\xpos,\ypos)[#2`#3`#5]{#7}{\arrowtypeb}{#9}
\advance\ypos by \tempcounta
\puthmorphism(\xpos,\ypos)[\phantom{#2}`\phantom{#3}`#4]{#7}{\arrowtypea}{#9}
\advance\ypos by -\tempcounta \advance\ypos by -\tempcounta
\puthmorphism(\xpos,\ypos)[\phantom{#2}`\phantom{#3}`#6]{#7}{\arrowtypec}{#9}
}}
 
\def\setarrowtoks[#1`#2`#3`#4`#5`#6]{%
\def\toka{#1}
\def\tokb{#2}
\def\tokc{#3}
\def\tokd{#4}
\def\toke{#5}
\def\tokf{#6}
}
\def\hex{\@ifnextchar <{\hexp}{\hexp<1000`400>}}
\def\hexp<#1`#2>[#3`#4`#5`#6`#7`#8;#9]{%
\setarrowtoks[#9]
\yext=#2 \advance \yext by #2
\xext=#1 \advance\xext by \yext
\bfig
\putCtriangle<-1`0`1;#2>(0,0)[`#5`;\tokb``\tokd]
\xext=#1 \yext=#2 \advance \yext by #2
\putsquare<1`0`0`1;\xext`\yext>(#2,0)[#3`#4`#7`#8;\toka```\tokf]
\advance \xext by #2
\putDtriangle<0`1`-1;#2>(\xext,0)[`#6`;`\tokc`\toke]
\efig
}
\makeatother

\textheight255mm
\textwidth160mm
\headheight0mm
\headsep-10mm
\topmargin0mm
\oddsidemargin0mm
\evensidemargin0mm
\begin{document}

\title{Properties of $\ $the Exotic Characteristic Homomorphism for a Pair of Lie
Algebroids, Relationship with the Koszul Homomorphism for\\a Pair of Lie algebras}
\date{by\\
\ \\
Bogdan Balcerzak and Jan Kubarski}
\maketitle

\begin{abstract}
We examine functorial and homotopy properties of the exotic characteristic
homomorphism in the category of Lie algebroids which was lastly obtained by
the authors in \cite{B-K}. This homomorphism depends on a triple $\left(
A,B,\nabla\right)  $ where $B\subset A$ are regular Lie algebroids, both over
the same regular foliated manifold $\left(  M,F\right)  $, and $\nabla$ is a
flat $L$-connection in $A$, where $L$ is an arbitrary Lie algebroid over $M$.
The Rigidity Theorem (i.e. the independence from the choice of homotopic Lie
subalgebroids of $B$) is obtained. The exotic characteristic homomorphism is
factorized by one (called universal) obtained for a pair of regular Lie
algebroids. We raise the issue of injectivity of the universal homomorphism
and establish injectivity for special cases. Here the Koszul homomorphism for
pairs of isotropy Lie algebras plays a major role.

\end{abstract}

\section{Introduction}

\footnotetext{\hspace{-0.65cm}\textit{Mathematics Subject Classification
(2010):} 57R20 (primary); \ 58H05, 17B56, 53C05 (secondary)
\par
\hspace{-0.37cm}\textit{Keywords:} secondary (exotic) flat characteristic
classes, secondary characteristic homomorphism, Lie algebroid, functoriality,
homotopy invariance}

In \cite{B-K} we constructed some secondary (exotic)\ characteristic
homomorphism
\[
\Delta_{\left(  A,B,\nabla\right)  \#}:\mathsf{H}^{\bullet}\left(  {{\pmb{g}}%
},B\right)  \longrightarrow\mathsf{H}^{\bullet}\left(  L\right)
\]
for a triple $\left(  A,B,\nabla\right)  $, in which $A$ is a regular Lie
algebroid over a foliated manifold $\left(  M,F\right)  $, $B$ its regular
subalgebroid on the same foliated manifold $\left(  M,F\right)  $,
${{\pmb{g}}}$ the kernel of the anchor of $A$, and $\nabla:L\rightarrow A$ a
flat $L$-connection in $A$ for an arbitrary Lie algebroid $L$ over $M$. The
domain of this homomorphism is the Lie algebroid analog to the relative
cohomology algebra for a pair of Lie algebras defined in \cite{Chev-Eil}.
$\Delta_{\left(  A,B,\nabla\right)  \#}$ generalizes some known secondary
characteristic classes: for flat principal fibre bundles with a reduction
(Kamber, Tondeur \cite{K-T3}) and two approaches to flat characteristic
classes for Lie algebroids, the one for regular Lie algebroids due to Kubarski
\cite{K5} and the one for representations of not necessarily regular Lie
algebroids on vector bundles developed by Crainic (\cite{Cr}, \cite{Cr-F}).

For $L=A$ and $\nabla=\operatorname*{id}_{A}$ we obtain a new universal
characteristic homomorphism $\Delta_{\left(  A,B\right)  \#}$, which
factorizes the characteristic homomorphism $\Delta_{\left(  A,B,\nabla\right)
\#}$ for each flat $L$-connection $\nabla:L\rightarrow A$, i.e.%
\begin{equation}
\Delta_{\left(  A,B,\nabla\right)  \#}=\nabla^{\#}\circ\Delta_{\left(
A,B\right)  \#}.
\end{equation}
Clearly, no class from the kernel of $\Delta_{\left(  A,B\right)  \#}$ is an
obstruction to the fact that the given flat connection $\nabla:L\rightarrow A$
is induced by a connection in $B$. By this reason, we put the following (new)
question for secondary characteristic classes: \textbf{Is the exotic universal
characteristic homomorphism }$\Delta_{\left(  A,B\right)  \#}$\textbf{ a
monomorphism? }In Section \ref{universal}, we give the positive answer under
some assumptions.

We remark that the characteristic homomorphism $\Delta_{\left(  A,B\right)
\#}:\mathsf{H}^{\bullet}\left(  {{\pmb{g}}},B\right)  \longrightarrow
\mathsf{H}^{\bullet}\left(  A\right)  $ really depends only on the inclusion
$i:B\hookrightarrow A$, \cite{B-K}. For this inclusion --- as for any
homomorphism of Lie algebroids --- I.~Vaisman in \cite{Vaisman} defined
secondary characteristic classes $\mu_{2h-1}\left(  i\right)  $ lying in
$\mathsf{H}^{4h-3}\left(  B\right)  $, i.e. in a different group of cohomology
than universal characteristic classes (which belong to $\mathsf{H}^{\bullet
}\left(  A\right)  $). The detailed relationships between these frameworks for
secondary characteristic classes will be the subject of the next paper. We
point out only (see \cite{Fernandes}, \cite{Cr}, \cite{Cr-F}, \cite{B-K}) that
the modular class $\operatorname{mod}(\widetilde{A})$ of a Lie algebroid
$\widetilde{A}$ (for a definition see \cite{Weinstein},
\cite{Evens-Lu-Weinstein}) --- which is equal to the first secondary
characteristic class of the anchor of $\widetilde{A}$ --- in case where the
basic connection $\nabla=(\hat{\nabla},\check{\nabla})$ given in
\cite{Fernandes} is flat, can express in the term of secondary characteristic
homomorphism $\Delta_{\left(  A,B,\nabla\right)  \#}$ for the triple $\left(
A,B,\nabla\right)  $ where $A=\mathcal{A}(\widetilde{A}\oplus T^{\ast}M)$ is a
Lie algebroid of the vector bundle $\widetilde{A}\oplus T^{\ast}M$,
$B=\mathcal{A}(\widetilde{A}\oplus T^{\ast}M,\left\{  h\right\}  )$ is its Lie
subalgebroid being the Lie algebroid of the Riemannian reduction
$(\widetilde{A}\oplus T^{\ast}M,\left\{  h\right\}  )$ and $\nabla
=(\hat{\nabla},\check{\nabla})$; namely $\operatorname{mod}(\widetilde{A})$
and $\Delta_{\left(  A,B,\nabla\right)  \#}\left(  y_{1}\right)  $ are equal
up to a constant. The first secondary characteristic class $\mu_{1}\left(
i\right)  $ of the considered inclusion $i:B\hookrightarrow A$ equals
$\operatorname{mod}\left(  B\right)  -i^{\#}\left(  \operatorname{mod}%
A\right)  $. So, it can be expressed then in terms of characteristic classes
from the images of suitable characteristic homomorphisms of the form
$\Delta_{\left(  A^{\prime},B^{\prime},\nabla^{\prime}\right)  \#}$
constructed in \cite{B-K}.

The meaning of the classical exotic characteristic homomorphism for a
principal bundle with a given reduction consist in that it measures the
incompatibility of two geometric structures on a given principal bundle: its
reduction and a flat connection. Namely, if a flat connection is a connection
in a given reduction, this exotic characteristic homomorphism is trivial (i.e.
it is the zero homomorphism in all positive degrees). The exotic
characteristic homomorphism $\Delta_{\left(  A,B,\nabla\right)  \#}$ for Lie
algebroids has the similar meaning.

The classical exotic homomorphism for given principal bundle $P$ and its
reduction $P^{\prime}$ has stronger property: it is trivial if a given flat
connection has values in any reduction homotopic to $P^{\prime}$ (in some
cases every two $H$-reduction are homotopic \cite{K-T3}).

Chapter \ref{functorial} concerns investigation of homotopic properties of the
generalized exotic homomorphism $\Delta_{\left(  A,B,\nabla\right)  \#}$ for
Lie algebroids. We examine here the notion of homotopic Lie subalgebroids,
which was introduced in \cite{K5} as a natural generalization of the notion of
homotopic two $H$-reductions of a principal bundle. We show also functorial
properties of the considered homomorphism $\Delta_{\left(  A,B,\nabla\right)
\#}$ with respect to homomorphisms of Lie algebroids (not necessary over
identity on the base manifold).

Chapter \ref{universal} concerns exotic universal characteristic homomorphism
$\Delta_{\left(  A,B\right)  \#}$ in two cases. First, for the trivial case of
Lie algebroids over a point, i.e. for Lie algebras. This universal
homomorphism is, in fact, equivalent (up to the sign) to the well known
\textquotedblright Koszul homomorphism\textquotedblright\ $\Delta_{\left(
\mathfrak{g},\mathfrak{h}\right)  \#}:\mathsf{H}^{\bullet}\left(
{\mathfrak{g}}/{\mathfrak{h}}\right)  \rightarrow\mathsf{H}^{\bullet}\left(
\mathfrak{g}\right)  $ for a pair of Lie algebras $\left(  \mathfrak{g}%
,\mathfrak{h}\right)  $, $\mathfrak{h}\subset\mathfrak{g}$ \cite{K},
\cite{GHV}. In \cite{GHV} the injectivity of $\Delta_{\left(  \mathfrak{g}%
,\mathfrak{h}\right)  \#}$ is considered and used to investigate of the
cohomology algebra of the homogeneous manifolds $G/H$. Next, applying the Lie
functor for principal fibre bundles it gives a new universal\ homomorphism for
the reduction of a principal fibre bundle. It factorizes the standard
secondary characteristic homomorphisms $\Delta_{\left(  P,P^{\prime}%
,\omega\right)  \#}$ for any flat connections $\omega$ in $P$. In Section
\ref{ostatni}, using functorial properties of the inclusion $\iota_{x}:\left(
{{\pmb{g}}}_{x},{{\pmb{h}}}_{x}\right)  \rightarrow\left(  A,B\right)  $ over
the map $\left\{  x\right\}  \hookrightarrow M$, where ${{\pmb{g}}}%
_{x},{{\pmb{h}}}_{x}$ are isotropy algebras of Lie algebroids $A$ and $B$ at
$x\in M$, we show connection of the exotic universal characteristic
homomorphism $\Delta_{\left(  A,B\right)  \#}$ with the Koszul homomorphism
for isotropy Lie algebras $\left(  {{\pmb{g}}}_{x},{{\pmb{h}}}_{x}\right)  $.
We find some conditions under which the characteristic homomorphism
$\Delta_{\left(  A,B\right)  \#}$, for a pair $B\subset A$, is a monomorphism.
The presented considerations show that the Koszul homomorphism plays an
essential role for the study of exotic characteristic classes.

In the paper we suppose that the reader is familiar with Lie algebroids and
for more about Lie algebroids and its connections we refer to \cite{M1},
\cite{Higgins-Mackenzie}, \cite{K6}, \cite{B-K-W}, \cite{Fernandes}.

\section{Construction of Exotic Characteristic
Homomorphism\label{Unifying_homomorphism}}

We shall briefly explain the construction of the exotic characteristic
homomorphism and the universal exotic characteristic homomorphism on Lie
algebroids from \cite{B-K}.

Let $\left(  {A},[\![\cdot,\cdot]\!],\#_{A}\right)  $ be a regular Lie
algebroid over a foliated manifold $\left(  M,F\right)  $, $B$ its regular
\textbf{subalgebroid} on the same foliated manifold $\left(  M,F\right)  $,
$L$ a Lie algebroid over $M$ and $\nabla:L\rightarrow A$ a \textbf{flat}
$L$-connection in $A$. We call the triple%
\[
\left(  A,B,\nabla\right)
\]
an FS-\emph{Lie algebroid}. Let $\lambda:F\rightarrow B$ be an arbitrary
connection in $B$. Then $j\circ\lambda:F\rightarrow A$ is a connection in $A$.
Let $\breve{\lambda}:A\rightarrow{{\pmb{g}}}$ be its connection form.
Summarizing, we have a flat $L$-connection $\nabla:L\rightarrow A$ in $A$ and
the following commutative diagram%
\[
\xext=2100\yext=700\begin{picture}(\xext,\yext)(\xoff,\yoff) \putmorphism(0,620)(1,0)[0`\pmb{g}`]{500}1a \putmorphism(550,620)(1,0)[`A`]{450}1a \putmorphism(1050,620)(1,0)[``\#_{A}]{400}1a \putmorphism(1050,70)(-1,0)[``\lambda]{400}{-1}b \putmorphism(1500,620)(1,0)[F`0`]{500}1a \putmorphism(0,100)(1,0)[0`\pmb{h}`]{500}1a \put(492,170){{$\cup$}}\putmorphism(550,100)(1,0)[`B`]{450}1a \putmorphism(1050,100)(1,0)[`F`\#_{B}]{450}1a \putmorphism(1550,100)(1,0)[`0.`]{450}1b \put(550,624){{$\subset$}} \putmorphism(500,600)(0,1)[``]{460}{-1}r \put(992,170){{$\cup$}}\putmorphism(1000,600)(0,1)[``]{460}{-1}r \put(935,590){\vector(-1,0){370}}\put(750,540){\makebox(0,0){$\breve{\lambda} $}} \put(750,680){\makebox(0,0){$\iota$}} \put(1050,380){\makebox(0,0){$j$}} \put(1490,170){\line(0,1){370}} \put(1515,170){\line(0,1){370}} \end{picture}
\]
The homomorphism $\omega_{B,\nabla}:L\longrightarrow{{\pmb{g}/\pmb{h}}}$,
$\omega_{B,\nabla}\left(  w\right)  =[-(\breve{\lambda}\circ\nabla)\left(
w\right)  ]$ does not depend on the choice of an auxiliary connection
$\lambda:F\rightarrow A$ and $\omega_{B,\nabla}=0$ if $\nabla$ takes values in
$B$. Let us define a homomorphism of algebras
\begin{equation}
\Delta_{\left(  A,B,\nabla\right)  }:\Gamma{\LARGE (}\bigwedge\nolimits^{k}%
\left(  {{\pmb{g}/\pmb{h}}}\right)  ^{\ast}{\LARGE )}\longrightarrow
\Omega\left(  L\right)  ,
\end{equation}%
\[
{\large (}\Delta_{\left(  A,B,\nabla\right)  }\Psi{\large )}_{x}\left(
w_{1}\wedge\ldots\wedge w_{k}\right)  =\left\langle \Psi_{x},\omega_{B,\nabla
}\left(  w_{1}\right)  \wedge\ldots\wedge\omega_{B,\nabla}\left(
w_{k}\right)  \right\rangle ,\ \ w_{i}\in L_{|x}.
\]
In the algebra $\Gamma\left(  \bigwedge\left(  {{\pmb{g}/\pmb{h}}}\right)
^{\ast}\right)  $ we distinguish the subalgebra \label{sec3.1 (A) copy(1)}%
$\left(  \Gamma\left(  \bigwedge\left(  {{\pmb{g}/\pmb{h}}}\right)  ^{\ast
}\right)  \right)  ^{\Gamma\left(  B\right)  }$ of invariant cross-sections
with respect to the representation of the Lie algebroid $B$ in the vector
bundle $\bigwedge\left(  {{\pmb{g}/\pmb{h}}}\right)  ^{\ast}$, associated to
the adjoint one $\operatorname*{ad}\nolimits_{B,{{\pmb{h}}}}:B\rightarrow
\operatorname*{A}\left(  {{\pmb{g}/\pmb{h}}}\right)  $, $\operatorname*{ad}%
\nolimits_{B,{{\pmb{h}}}}\left(  \xi\right)  \left(  \left[  \nu\right]
\right)  =\left[  [\![\xi,\nu]\!]\right]  $, $\xi\in\Gamma\left(  B\right)  $,
$\nu\in\Gamma\left(  {{\pmb{g}}}\right)  $. Recall, that $\Psi\in\left(
\Gamma\left(  \bigwedge^{k}\left(  {{\pmb{g}/\pmb{h}}}\right)  ^{\ast}\right)
\right)  ^{\Gamma\left(  B\right)  }$ if and only if%
\[
\left(  \#_{B}\circ\xi\right)  \hspace{-0.1cm}\left\langle \Psi,\left[
\nu_{1}\right]  \wedge\hspace{-0.05cm}\ldots\hspace{-0.05cm}\wedge\left[
\nu_{k}\right]  \right\rangle \hspace{-0.05cm}=\hspace{-0.05cm}\sum_{j=1}%
^{k}\left(  -1\right)  ^{j-1}\hspace{-0.1cm}\left\langle \Psi,\left[
[\![j\circ\xi,\nu_{j}]\!]\right]  \wedge\left[  \nu_{1}\right]  \wedge
\hspace{-0.05cm}\ldots\hat{\jmath}\ldots\hspace{-0.05cm}\wedge\left[  \nu
_{k}\right]  \right\rangle
\]
for all $\xi\in\Gamma\left(  B\right)  $ and $\nu_{j}\in\Gamma\left(
{{\pmb{g}}}\right)  $ (see \cite{K1}). In the space $\left(  \Gamma\left(
\bigwedge\left(  {{\pmb{g}/\pmb{h}}}\right)  ^{\ast}\right)  \right)
^{\Gamma\left(  B\right)  }$ of invariant cross-sections there exists a
differential operator $\bar{\delta}$ defined by%
\[
\hspace{-0.1cm}\left\langle \bar{\delta}\Psi,\left[  \nu_{1}\right]
\wedge\ldots\wedge\left[  \nu_{k}\right]  \right\rangle =%
{\displaystyle\sum\limits_{i<j}}
\left(  -1\right)  ^{i+j+1}\hspace{-0.1cm}\left\langle \Psi,\left[
[\![\nu_{i},\nu_{j}]\!]\right]  \wedge\left[  \nu_{1}\right]  \wedge\ldots
\hat{\imath}\ldots\hat{\jmath}\ldots\wedge\left[  \nu_{k}\right]
\right\rangle ,
\]
(see \cite{K5}) and we obtain the cohomology algebra
\[
\mathsf{H}^{\bullet}\left(  {{\pmb{g}}},B\right)  :=\mathsf{H}^{\bullet
}{\LARGE ((}\Gamma{\LARGE (}\bigwedge\left(  {{\pmb{g}/\pmb{h}}}\right)
^{\ast}{\Large ))}^{\Gamma\left(  B\right)  },\bar{\delta}{\Large )}.
\]
The homomorphism $\Delta_{\left(  A,B,\nabla\right)  }$ commutes with the
differentials $\bar{\delta}$ and $d_{L}$, where $d_{L}$ is the differential
operator in $\Omega\left(  L\right)  =\Gamma\left(  \bigwedge L^{\ast}\right)
$, see \cite{B-K}. In this way we obtain the cohomology homomorphism
\[
\Delta_{\left(  A,B,\nabla\right)  \#}:\mathsf{H}^{\bullet}\left(  {{\pmb{g}}%
},B\right)  \longrightarrow\mathsf{H}^{\bullet}\left(  L\right)  .
\]
In the case where $L=A\ $and $\nabla=\operatorname{id}_{A}:A\rightarrow A$ is
the identity map, we have particular case of a homomorphism for the pair
$\left(  A,B\right)  $:%
\begin{align*}
\Delta_{\left(  A,B\right)  } &  :=\Delta_{\left(  A,B,\operatorname{id}%
_{A}\right)  }:\Gamma{\LARGE (}\bigwedge\nolimits^{k}\left(
{{\pmb{g}/\pmb{h}}}\right)  ^{\ast}{\LARGE )}^{\Gamma\left(  B\right)
}\longrightarrow\Omega\left(  A\right)  ,\\
{\large (}\Delta_{\left(  A,B\right)  }\Psi{\large )}_{x}\left(  \upsilon
_{1}\wedge\ldots\wedge\upsilon_{k}\right)   &  =\langle\Psi_{x},[-\breve
{\lambda}\left(  \upsilon_{1}\right)  ]\wedge\ldots\wedge\lbrack
-\breve{\lambda}\left(  \upsilon_{k}\right)  ]\rangle,\ \ \ \ \upsilon_{i}\in
A_{|x}.
\end{align*}
$\Delta_{\left(  A,B,\nabla\right)  }$ can be written as a composition%
\[%
\begin{CD}
\Delta_{\left( A,B,\nabla\right) }:\operatorname{\Gamma}\left( \bigwedge
\left( {{\pmb{g}/\pmb{h}}}\right) ^{\ast}\right)
@>{   \Delta_{\left( A,B\right) } }>>
\Omega\left( A\right)    @>{\nabla^{\ast} }>> \Omega\left( L\right),
\end{CD}%
\]
where $\nabla^{\ast}$ is the pullback of forms. For this reason,
$\Delta_{\left(  A,B\right)  }$ induces the cohomology homomorphism%
\[
\Delta_{\left(  A,B\right)  \#}:\mathsf{H}^{\bullet}\left(  {{\pmb{g}}%
},B\right)  \longrightarrow\mathsf{H}^{\bullet}\left(  A\right)  ,
\]
which factorizes $\Delta_{\left(  A,B,\nabla\right)  \#}$ for every flat
$L$-connection $\nabla:L\rightarrow A$:%
\begin{equation}%
\begin{CD}
\Delta_{\left( A,B,\nabla\right) \#}:\mathsf{H}^{\bullet}\left( {{\pmb{g},B}%
}\right)      @>{ \Delta_{\left( A,B\right) \#}}>>
\mathsf{H}^{\bullet}\left(A\right)      @>{{\nabla}^{\#} }>> \mathsf
{H}^{\bullet}\left( L\right) .
\end{CD}%
\label{deltacohomology}%
\end{equation}
The map $\Delta_{\left(  A,B,\nabla\right)  \#}$ is called the
\textit{characteristic homomorphism }of the FS-Lie algebroid $\left(
A,B,\nabla\right)  $. We call elements of a subalgebra $\operatorname{Im}%
\Delta_{\left(  A,B,\nabla\right)  \#}\subset\mathsf{H}^{\bullet}\left(
L\right)  $ the \textit{secondary} (\textit{exotic})\textit{\ characteristic
classes} of this algebroid. In particular, $\Delta_{\left(  A,B\right)
\#}=\Delta_{\left(  A,B,\operatorname{id}_{A}\right)  \#}$ is the
characteristic homomorphism of the Lie subalgebroid $B\subset A$, which we
call the \textit{universal exotic characteristic homomorphism}; the
characteristic classes from its image we call the \textit{universal
characteristic classes} of the pair $B\subset A$.

The secondary characteristic homomorphism for FS-Lie algebroids generalizes
the following known characteristic classes: for flat regular Lie algebroids
(Kubarski), for flat principal fibre bundles with a reduction (Kamber,
Tondeur) and for representations of Lie algebroids on vector bundles (Crainic).

\begin{enumerate}
\item For $L=F$\ we obtain the case in which $\nabla:F\rightarrow A$ is a
usual connection in $A$. In this way the exotic characteristic homomorphism is
a generalization of one for a flat regular Lie algebroid given in \cite{K5},
see \cite{B-K}.

\item For $L=TM$\ and $A=TP/G$, and $B=TP^{\prime}/H$ ($P^{\prime}$ is an
$H$-reduction of $P$) we obtain the case equivalent to the standard classical
theory on principal fibre bundles \cite{K-T3} (see \cite{B-K} and Section
\ref{Exotic_for_principal_bundle} below for more details).

\item \cite{B-K} Let $A=\mathcal{A}\left(  \mathfrak{f}\right)  $ be the Lie
algebroid of a vector bundle $\mathfrak{f}$ over a manifold $M$,
$B=\mathcal{A}\left(  \mathfrak{f,}\left\{  h\right\}  \right)  \subset A$ its
Riemannian reduction (\cite{K3}), $L$ a Lie algebroid over $M$, $\nabla
:L\rightarrow\mathcal{A}\left(  \mathfrak{f}\right)  $ an $L$-connection on
$\mathfrak{f}$. Let $\Delta_{\#}$ denote the exotic characteristic
homomorphism for FS-Lie algebroid $\left(  \mathcal{A}\left(  \mathfrak{f}%
\right)  ,\mathcal{A}\left(  \mathfrak{f,}\left\{  h\right\}  \right)
,\nabla\right)  $. If the vector bundle $\mathfrak{f}$ is nonorientable or
orientable and of odd rank $n$, then the domain of $\Delta_{\#}$ is isomorphic
with $\bigwedge\left(  y_{1},y_{3},\ldots,y_{n^{\prime}}\right)  $ where
$n^{\prime}$ is the largest odd integer $\leq n$ and $y_{2k-1}\in
\mathsf{H}^{4k-3}\left(  \operatorname*{End}\mathfrak{f},\mathcal{A}\left(
\mathfrak{f},\left\{  h\right\}  \right)  \right)  $ is represented by the
multilinear trace form $\widetilde{y}_{2k-1}\in\Gamma\left(  \bigwedge
\nolimits^{4k-3}\left(  \operatorname*{End}\mathfrak{f/}\operatorname*{Sk}%
\mathfrak{f}\right)  ^{\ast}\right)  $. Then the image of $\Delta_{\#}$ is
generated by the Crainic classes $u_{1}\left(  \mathfrak{f}\right)  $,
$u_{5}\left(  \mathfrak{f}\right)  $,$\ldots$,$u_{4\left[  \frac{n+3}%
{4}\right]  -3}\left(  \mathfrak{f}\right)  $ (for details about the classes
developed by Crainic see \cite{Cr}, \cite{Cr-F}, \cite{B-K}). If
$\mathfrak{f}$ is orientable of even rank $n=2m$ with a volume form
$\operatorname{v}$, the domain of $\Delta_{\#}$ is additionally generated by
some class $y_{2m}\in\mathsf{H}^{2m}\left(  \operatorname*{End}\mathfrak{f}%
,\mathcal{A}\left(  \mathfrak{f},\left\{  h,\operatorname{v}\right\}  \right)
\right)  $ represented by a form induced by the Pfaffian and where
$\mathcal{A}\left(  \mathfrak{f},\left\{  h,\operatorname{v}\right\}  \right)
$ is the Lie algebroid of the $SO\left(  n,\mathbb{R}\right)  $-reduction
$\mathcal{L}\left(  \mathfrak{f},\left\{  h,\operatorname{v}\right\}  \right)
$ of the frames bundle $\mathcal{L}\mathfrak{f}$ of $\mathfrak{f}$; see
\cite{B-K}. Then the algebra of exotic characteristic classes for $\left(
\mathcal{A}\left(  \mathfrak{f}\right)  ,\mathcal{A}\left(  \mathfrak{f,}%
\left\{  h,\operatorname{v}\right\}  \right)  ,\nabla\right)  $ is generated
by $u_{1}\left(  \mathfrak{f}\right)  $, $u_{5}\left(  \mathfrak{f}\right)
$,$\ldots$,$u_{4\left[  \frac{n+3}{4}\right]  -3}\left(  \mathfrak{f}\right)
\ $and additionally by $\Delta_{\#}\left(  y_{2m}\right)  $. In \cite{B-K} we
give an example of FS-Lie algebroid where the Pfaffian induces the non-zero
characteristic class.
\end{enumerate}

From (\ref{deltacohomology}) one can see that for a pair of regular Lie
algebroids $\left(  A,B\right)  $, $B\subset A$, both over a foliated manifold
$\left(  M,F\right)  $,\ and for an arbitrary element $\zeta\in\mathsf{H}%
^{\bullet}\left(  {{\pmb{g},B}}\right)  $\ there exists a
(universal)\ cohomology class $\Delta_{\left(  A,B\right)  \,\#}\left(
\zeta\right)  \in\mathsf{H}^{\bullet}\left(  A\right)  $\ such that for any
Lie algebroid $L$\ over $M$\ and a flat $L$-connection $\nabla:L\rightarrow A$
the equality%
\[
\Delta_{\left(  A,B,\nabla\right)  \,\#}\left(  \zeta\right)  =\nabla
^{\#}\left(  \Delta_{\left(  A,B\right)  \,\#}\left(  \zeta\right)  \right)
\]
holds. Therefore, no element from the kernel of $\Delta_{\left(  A,B\right)
\,\#}$ can be used to compare the flat connection $\nabla$ with a reduction
$B\subset A$. Hence it is interesting the following

\begin{problem}
Is the characteristic homomorphism $\Delta_{\left(  A,B\right)  \,\#}%
:\mathsf{H}^{\bullet}\left(  {{\pmb{g},B}}\right)  \rightarrow\mathsf{H}%
^{\bullet}\left(  A\right)  $ a monomorphism for a given $B\subset A$?
\emph{The answer yes holds in some cases, see below.}
\end{problem}

\section{Functoriality and Homotopic Properties\label{functorial}}

\subsection{Functoriality}

Let $\left(  A,B\right)  $ and $\left(  A^{\prime},B^{\prime}\right)  $ be two
pairs of regular Lie algebroids over $\left(  M,F\right)  $ and $\left(
M^{\prime},F^{\prime}\right)  ,$ respectively, where $B\subset A$, $B^{\prime
}\subset A^{\prime},$ and let $H:A^{\prime}\rightarrow A$ be a homomorphism of
Lie algebroids over a mapping $f:\left(  M^{\prime},F^{\prime}\right)
\rightarrow\left(  M,F\right)  $ of foliated manifolds such that $H\left[
B^{\prime}\right]  \subset B$. We write $\left(  H,f\right)  :\left(
A^{\prime},B^{\prime}\right)  \rightarrow\left(  A,B\right)  $. Let
$H^{+\,\#}:\mathsf{H}^{\bullet}\left(  {{\pmb{g},B}}\right)  \rightarrow
\mathsf{H}^{\bullet}\left(  {{\pmb{g}}}^{\prime}{,B}^{\prime}\right)  $ be the
homomorphism of cohomology algebras induced by the pullback $H^{+\,\ast
}:\Gamma\left(  \bigwedge\nolimits^{k}\left(  {{\pmb{g}/\pmb{h}}}\right)
^{\ast}\right)  \rightarrow\Gamma\left(  \bigwedge\nolimits^{k}({{\pmb{g}}%
}^{\prime}{{/\pmb{h}}}^{\prime})^{\ast}\right)  $, see \cite[Proposition
4.2]{K5}.

\begin{theorem}
[The functoriality of $\Delta_{\left(  A,B\right)  \,\#}$]\label{functorof0}%
For a given pair of regular Lie algebroids $\left(  A,B\right)  $, $\left(
A^{\prime},B^{\prime}\right)  $ and a homomorphism $\left(  H,f\right)
:\left(  A^{\prime},B^{\prime}\right)  \rightarrow\left(  A,B\right)  \ $we
have the commutativity of the following diagram
\begin{center}
\setsqparms[1`1`1`1;950`510]
\square[\mathsf{H}^{\bullet}({\pmb{g}},B)`\mathsf{H}^{\bullet}(A)`\mathsf
{H}^{\bullet}({\pmb{g}}^{\prime},B^{\prime})`\mathsf{H}^{\bullet}(A^{\prime
}).;
{\Delta}_{ \left(A,B\right)\#}`H^{+ \#}`H^{\#}`{\Delta}_{(A',B')\#} ]
\end{center}%

\end{theorem}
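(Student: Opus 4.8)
The plan is to verify the commutativity already at the level of cochains, before passing to cohomology. Fix once and for all an auxiliary connection $\lambda:F\rightarrow B$ in $B$, with connection form $\breve\lambda:A\rightarrow\pmb{g}$, and likewise a connection $\lambda':F'\rightarrow B'$ in $B'$ with connection form $\breve\lambda':A'\rightarrow\pmb{g}'$; these may be chosen completely independently. With respect to these choices the two sides of the square are represented by the cochain maps $H^{\ast}\circ\Delta_{(A,B)}$ and $\Delta_{(A',B')}\circ H^{+\,\ast}$, both sending $\Gamma(\bigwedge(\pmb{g}/\pmb{h})^{\ast})$ into $\Omega(A')$. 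I would show that these two cochain maps are in fact \emph{equal}, which is strictly stronger than the asserted commutativity on cohomology and sidesteps the independence-of-$\lambda$ issue entirely.

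Unwinding the definitions, for $\Psi\in\Gamma(\bigwedge^{k}(\pmb{g}/\pmb{h})^{\ast})$ and $\upsilon'_{1},\dots,\upsilon'_{k}\in A'_{|x'}$ the value of $H^{\ast}(\Delta_{(A,B)}\Psi)$ is $\langle\Psi_{f(x')},[-\breve\lambda(H\upsilon'_{1})]\wedge\cdots\wedge[-\breve\lambda(H\upsilon'_{k})]\rangle$, while the value of $\Delta_{(A',B')}(H^{+\,\ast}\Psi)$ is $\langle\Psi_{f(x')},\bar H[-\breve\lambda'(\upsilon'_{1})]\wedge\cdots\wedge\bar H[-\breve\lambda'(\upsilon'_{k})]\rangle$, where $\bar H:\pmb{g}'/\pmb{h}'\rightarrow\pmb{g}/\pmb{h}$ is the map induced by $H^{+}=H|_{\pmb{g}'}$ (which carries $\pmb{h}'$ into $\pmb{h}$ because $H[B']\subset B$). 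Hence everything reduces to the single \emph{key congruence}
\[
[\breve\lambda(H\upsilon')]=\bar H[\breve\lambda'(\upsilon')]\qquad\text{in }(\pmb{g}/\pmb{h})_{|f(x')}
\]
for every $\upsilon'\in A'$, i.e. to $\breve\lambda(H\upsilon')-H^{+}(\breve\lambda'(\upsilon'))\in\pmb{h}$.

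To prove the congruence I would use the defining splitting of a connection form, $\iota(\breve\lambda(\upsilon))=\upsilon-(j\circ\lambda)(\#_{A}\upsilon)$ (and its primed analogue, where $\iota,\iota'$ are the kernel inclusions and $j,j'$ the subalgebroid inclusions), together with the three standard compatibilities of the morphism $(H,f)$: $\#_{A}\circ H=Tf\circ\#_{A'}$, $H\circ\iota'=\iota\circ H^{+}$, and $H\circ j'=j\circ H_{B}$ where $H_{B}:B'\rightarrow B$ is the restriction of $H$. Applying $H$ to the primed splitting and subtracting the unprimed splitting evaluated at $H\upsilon'$, the terms $H\upsilon'$ cancel and, after these compatibilities, one is left with
\[
\iota\bigl(\breve\lambda(H\upsilon')-H^{+}(\breve\lambda'(\upsilon'))\bigr)=j\bigl(H_{B}(\lambda'(v))-\lambda(Tf(v))\bigr),\qquad v:=\#_{A'}\upsilon'.
\]
Both $H_{B}(\lambda'(v))$ and $\lambda(Tf(v))$ lie in $B_{|f(x')}$ and are mapped by $\#_{B}$ to $Tf(v)$ (because $\lambda,\lambda'$ are connections and $\#_{B}\circ H_{B}=Tf\circ\#_{B'}$), so their difference lies in $\ker\#_{B}=\pmb{h}$. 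Since $\pmb{h}=\pmb{g}\cap B$ and $j|_{\pmb{h}}=\iota|_{\pmb{h}}$, the right-hand side is $\iota(w)$ with $w\in\pmb{h}$, and injectivity of $\iota$ yields the congruence.

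Granting the key congruence, the two cochain maps agree identically, so $H^{\ast}\circ\Delta_{(A,B)}=\Delta_{(A',B')}\circ H^{+\,\ast}$ on $\Gamma(\bigwedge(\pmb{g}/\pmb{h})^{\ast})$. It then remains only to pass to cohomology: $H^{\ast}$ is a cochain map for $d_{A},d_{A'}$ because $H$ is a Lie algebroid homomorphism, and $H^{+\,\ast}$ restricts to the $\Gamma(B')$-invariant subalgebra and commutes with $\bar\delta$ by \cite[Proposition 4.2]{K5}; inducing on cohomology gives exactly $H^{\#}\circ\Delta_{(A,B)\#}=\Delta_{(A',B')\#}\circ H^{+\,\#}$. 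I expect the only genuine work to be the key congruence; the main point to watch is that the auxiliary connections $\lambda,\lambda'$ are \emph{not} assumed $H$-related, so the argument must produce the correcting element of $\pmb{h}$ intrinsically, exactly as above, rather than by choosing compatible connections.
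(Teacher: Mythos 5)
Your proposal is correct and follows essentially the same route as the paper: the paper's proof consists precisely of asserting the key congruence $H^{+}\circ\breve{\lambda}'(u')-\breve{\lambda}(Hu')\in\pmb{h}$ and then checking commutativity on the level of forms, with the calculations left to the reader. You have simply supplied the details the paper omits (the splitting argument via the anchor compatibilities and $\ker\#_{B}=\pmb{h}$), and these are correct.
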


\begin{proof}
One can see that $H^{+}\circ\breve{\lambda}^{\prime}(u^{\prime})-\breve
{\lambda}(Hu^{\prime})\in{{\pmb {h}\;}}$for all\ \ $u^{\prime}\in A^{\prime}$,
where $\lambda$ and $\lambda^{\prime}$ are auxiliary connections in $B$ and
$B^{\prime},$ respectively. Applying this fact, it is sufficient to check the
commutativity of the diagram on the level of forms. The calculations are left
to the reader.
\end{proof}

\begin{definition}
\emph{Let }$\left(  A^{\prime},B^{\prime},\nabla^{\prime}\right)  $\emph{ and
}$\left(  A,B,\nabla\right)  $\emph{ be two FS-Lie algebroids on foliated
manifolds }$\left(  M^{\prime},F^{\prime}\right)  $\emph{ and }$\left(
M,F\right)  ,$\emph{ respectively, where }$\nabla:L\rightarrow A$\emph{ and
}$\nabla^{\prime}:L^{\prime}\rightarrow A^{\prime}$\emph{ are flat
connections. By a }homomorphism\emph{ }%
\[
H:(A^{\prime},B^{\prime},\nabla^{\prime})\longrightarrow\left(  A,B,\nabla
\right)
\]
over\emph{ }$f:\left(  M^{\prime},F^{\prime}\right)  \rightarrow\left(
M,F\right)  $\emph{ we mean a pair }$\left(  H,h\right)  $\emph{ such that:}

\begin{itemize}
\item $H:A^{\prime}\rightarrow A$\emph{ is a homomorphism of regular Lie
algebroids over }$f$\emph{ and }$H\left[  B^{\prime}\right]  \subset
B$\emph{,}

\item $h:L^{\prime}\rightarrow L$\emph{ is also a homomorphism of Lie
algebroids over }$f$\emph{,}

\item $\nabla\circ h=H\circ\nabla^{\prime}.$
\end{itemize}
\end{definition}

Clearly, $h^{\#}\circ\nabla^{\#}=\nabla^{\prime\,\#}\circ H^{\#}$. So, from
(\ref{deltacohomology}) and Theorem \ref{functorof0} we obtain as a corollary
the following theorem:

\begin{theorem}
[\emph{The functoriality of }$\Delta_{\left(  A,B,\nabla\right)  \,\#}$]\emph{
}The following diagram
\begin{center}
\setsqparms[1`1`1`1;950`510]
\square[\mathsf{H}^{\bullet}({\pmb{g}},B)`\mathsf{H}^{\bullet}(L)`\mathsf
{H}^{\bullet}({\pmb{g}}^{\prime},B^{\prime})`\mathsf{H}^{\bullet}(L^{\prime});
{\Delta}_{(A,B,\nabla)\#}`H^{+ \#}`h^{\#}`{\Delta}_{( A',B',\nabla')\#}]
\end{center}
commutes.\ 
\end{theorem}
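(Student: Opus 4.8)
The plan is to derive the square as a corollary of Theorem \ref{functorof0}, combining it with the factorization (\ref{deltacohomology}); indeed, no new geometric construction is needed, only three ingredients already at hand. These are: (i) the factorizations $\Delta_{(A,B,\nabla)\#}=\nabla^{\#}\circ\Delta_{(A,B)\#}$ and $\Delta_{(A^{\prime},B^{\prime},\nabla^{\prime})\#}=\nabla^{\prime\,\#}\circ\Delta_{(A^{\prime},B^{\prime})\#}$, both instances of (\ref{deltacohomology}); (ii) the functoriality of the universal homomorphism from Theorem \ref{functorof0}, which reads $H^{\#}\circ\Delta_{(A,B)\#}=\Delta_{(A^{\prime},B^{\prime})\#}\circ H^{+\#}$; and (iii) the compatibility relation $h^{\#}\circ\nabla^{\#}=\nabla^{\prime\,\#}\circ H^{\#}$ recorded immediately before the statement.

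Given these, I would carry out a purely formal diagram chase at the level of cohomology. Starting from the lower-left route and using (i),
\[
h^{\#}\circ\Delta_{(A,B,\nabla)\#}=h^{\#}\circ\nabla^{\#}\circ\Delta_{(A,B)\#}.
\]
Applying (iii) to the first two factors rewrites the right-hand side as $\nabla^{\prime\,\#}\circ H^{\#}\circ\Delta_{(A,B)\#}$; then (ii) turns the tail $H^{\#}\circ\Delta_{(A,B)\#}$ into $\Delta_{(A^{\prime},B^{\prime})\#}\circ H^{+\#}$, giving $\nabla^{\prime\,\#}\circ\Delta_{(A^{\prime},B^{\prime})\#}\circ H^{+\#}$. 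Finally the primed factorization in (i) collapses $\nabla^{\prime\,\#}\circ\Delta_{(A^{\prime},B^{\prime})\#}$ to $\Delta_{(A^{\prime},B^{\prime},\nabla^{\prime})\#}$, so that $h^{\#}\circ\Delta_{(A,B,\nabla)\#}=\Delta_{(A^{\prime},B^{\prime},\nabla^{\prime})\#}\circ H^{+\#}$, which is exactly the claimed commutativity.

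The only point that genuinely needs to be checked is ingredient (iii), but I expect this to be routine rather than the real obstacle. The defining condition $\nabla\circ h=H\circ\nabla^{\prime}$ on Lie algebroid homomorphisms yields, by contravariance of the pullback of forms, the equality $h^{\ast}\circ\nabla^{\ast}=\nabla^{\prime\,\ast}\circ H^{\ast}$ on the level of $\Omega$, and this descends to cohomology since all four pullbacks are cochain maps. Consequently the entire substance of the theorem already resides in Theorem \ref{functorof0}, which is established above; the present statement is merely its composition with the connection-pullback compatibility, and I anticipate no further difficulty.
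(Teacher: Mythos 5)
Your proposal is correct and follows exactly the paper's route: the paper derives this theorem as a corollary of Theorem \ref{functorof0} together with the factorization (\ref{deltacohomology}) and the relation $h^{\#}\circ\nabla^{\#}=\nabla^{\prime\,\#}\circ H^{\#}$ obtained from $\nabla\circ h=H\circ\nabla^{\prime}$. You merely spell out the short diagram chase that the paper leaves implicit.
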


\subsection{Homotopy Invariance}

We recall the definition of homotopy between homomorphisms of Lie algebroids.

\begin{definition}
\emph{\cite{K4}} \emph{Let }$H_{0},\,H_{1}:L^{\prime}\rightarrow L$\emph{ be
two homomorphisms of Lie algebroids. By a }homotopy joining $H_{0}$\ to\emph{
}$H_{1}$\emph{ we mean a homomorphism of Lie algebroids }%
\[
H:T\mathbb{R}\times L^{\prime}\longrightarrow L,
\]
\emph{such that }$H\left(  \theta_{0},\cdot\right)  =H_{0}$\emph{ and
}$H\left(  \theta_{1},\cdot\right)  =H_{1}$\emph{, where }$\theta_{0}$\emph{
and }$\theta_{1}$\emph{ are null vectors tangent to }$\mathbb{R}$\emph{ at
}$0$\emph{ and }$1$,$\ $\emph{respectively. We say that }$H_{0}$\emph{ and
}$H_{1}$\emph{ are }homotopic\emph{ and write }$H_{0}\sim H_{1}$\emph{. We say
that} $F:L^{\prime}\rightarrow L$ \emph{is a} \textit{homotopy equivalence
}\emph{if there is a homomorphism }$G:L\rightarrow L^{\prime}$ \emph{such
that} $G\circ F\sim\operatorname{id}_{L^{\prime}}$ \emph{and }$F\circ
G\sim\operatorname{id}_{L}\emph{.}$
\end{definition}

The homotopy $H:T\mathbb{R}\times L^{\prime}\rightarrow L$ determines a chain
homotopy operator (\cite{K4}, \cite{Balcerzak-Stokes})$\ $which implies that
$H_{0}^{\#}=H_{1}^{\#}:\mathsf{H}^{\bullet}\left(  L\right)  \rightarrow
\mathsf{H}^{\bullet}\left(  L^{\prime}\right)  $.

\begin{definition}
\emph{\cite{K5}} \label{defhomotop}\emph{Two Lie subalgebroids }$B_{0}%
,\,B_{1}\subset A$\emph{ (both over the same foliated manifold }$\left(
M,F\right)  $\emph{) are said to be }homotopic\emph{, if there exists a Lie
subalgebroid }$B\subset T\mathbb{R}\times A$\emph{ over }$\left(
\mathbb{R}\times M,T\mathbb{R}\times F\right)  $\emph{, such that for }%
$t\in\left\{  0,1\right\}  $%
\begin{equation}
\upsilon_{x}\in B_{t|x}\text{\ \ \emph{if\ and only if}\ \ }\left(  \theta
_{t},\upsilon_{x}\right)  \in B_{|\left(  t,x\right)  }. \label{homotopB}%
\end{equation}
$B$\emph{ is called a }subalgebroid joining $B_{0}$ with $B_{1}$.

\emph{See \cite[Proposition 5.2]{K5} to compare the relation of homotopic
subbundles of a principal bundle with the relation of homotopic
subalgebroids.}
\end{definition}

Let $B_{0},\,B_{1}$ be two homotopic Lie subalgebroids over $\left(
M,F\right)  $ and let $B\subset T\mathbb{R}\times A$ be a subalgebroid of
$T\mathbb{R}\times A$ joining $B_{0}$ with $B_{1}$, $t\in\left\{  0,1\right\}
$. For the homomorphism of Lie algebroids $F_{t}^{A}:A\rightarrow
T\mathbb{R}\times A$,\ $\upsilon_{x}\mapsto\left(  \theta_{t},\upsilon
_{x}\right)  $ over $f_{t}:M\rightarrow\mathbb{R}\times M$,\ $f_{t}\left(
x\right)  =\left(  t,x\right)  $,\ (\ref{homotopB}) yields $F_{t}^{A}\left[
B_{t}\right]  \subset B$. Applying the functoriality of $\Delta_{t\,\#}%
:=\Delta_{\left(  A,B_{t}\right)  \,\#}$\ and $\Delta_{\left(  A,B\right)
\,\#}$ (see Theorem \ref{functorof0}), we obtain the commutativity of the
diagram
\begin{center}
\xext=1600 \yext=1400
\begin{picture}(\xext, \yext)(\xoff,\yoff)
\setsqparms[1`-1`-1`1;990`550]
\putsquare(420,650)[\mathsf{H}^{\bullet}({\pmb{g}},B_{0})`\mathsf{H}^{\bullet
}(A)`\mathsf{H}^{\bullet}(0{\times}{\pmb{g}},B)`
\mathsf{H}^{\bullet}(T\mathbb{R}{\times}A);{\Delta}_{0 \#}`{F_{0}}%
^{+ \#}`{F_{0}}^{A \#}`{\Delta}_{(T\mathbb{R}{\times}A,B) \#}]
\setsqparms[0`1`1`1;1000`550]
\putsquare(420,100)[``\mathsf{H}^{\bullet}({\pmb{g}},B_{1})`\mathsf
{H}^{\bullet}(A);`{F_{1}}^{+ \#}`{F_{1}}^{A \#}`{\Delta}_{1 \#}]
\put(500,380){\makebox(0,0){$\simeq$}}
\put(500,900){\makebox(0,0){$\simeq$}}
\put(20,1210){\line(0,-1){1100}}
\put(19,1210){\line(1,0){195}}
\put(19,110){\vector(1,0){195}}
\put(-70,650){\makebox(0,0){$\alpha$}}
\end{picture}
\end{center}
where $F_{t}^{+\,\#}\equiv\left(  F_{t}^{A}\right)  ^{+\,\#}$. In the paper
\cite{K5} it is shown that $F_{t}^{+\,\#}$ are isomorphisms of algebras. We
add that in the proof of this fact one makes use of some theorem concerning
invariant cross-sections over $\mathbb{R}\times M$ (with respect to a suitable
representation) and one uses global solutions of some system of first-order
partial differential equations with parameters, see \cite{K6}.

For any flat $L$-connection $\nabla:L\rightarrow A$, the induced
$T\mathbb{R}\times L$-connection $\operatorname{id}_{T\mathbb{R}}\times\nabla$
is also flat.\ $F_{t}^{A}$ determines a homomorphism
\[
\left(  A,B_{t},\nabla\right)  \longrightarrow\left(  T\mathbb{R\times
}A,B,\operatorname{id}_{T\mathbb{R}}\times\nabla\right)
\]
of \textbf{ }FS-Lie algebroids over $f_{t}:M\rightarrow\mathbb{R}\times
M$;\ and so we can complete the previous diagram to the following one.
\begin{center}
\xext=3000 \yext=1370
\begin{picture}(\xext, \yext)(\xoff,\yoff)
\setsqparms[1`-1`-1`1;990`550]
\putsquare(560,650)[\mathsf{H}^{\bullet}({\pmb{g}},B_{0})`\mathsf{H}^{\bullet
}(A)`\mathsf{H}^{\bullet}(0{\times}{\pmb{g}},B)`
\mathsf{H}^{\bullet}(T\mathbb{R}{\times}A);{\Delta}_{0 \#}`{F_{0}}%
^{+ \#}`{F_{0}}^{A \#}`\ \ \ {\Delta}_{(T\mathbb{R}{\times}A,B) \#}]
\setsqparms[0`0`-1`1;800`550]
\putsquare(1840,650)[`\mathsf{H}^{\bullet}(L)``
\mathsf{H}^{\bullet}(T\mathbb{R}{\times}L);``{F_{0}}^{L \#}`({\operatorname
{id}{\times}\nabla})^{\#}]
\put(1740,1198){\vector(1,0){720}}
\put(2090,1280){\makebox(0,0){${\nabla}^{\#}$}}
\setsqparms[0`1`1`1;1000`550]
\putsquare(560,100)[``\mathsf{H}^{\bullet}({\pmb{g}},B_{1})`\mathsf
{H}^{\bullet}(A);`{F_{1}}^{+ \#}`{F_{1}}^{A \#}`{\Delta}_{1 \#}]
\setsqparms[0`0`1`0;800`550]
\putsquare(1840,100)[```\mathsf{H}^{\bullet}(L);``{F_{1}}^{L\#}`]
\put(1740,110){\vector(1,0){720}}
\put(2090,60){\makebox(0,0){${\nabla}^{\#}$}}
\put(640,380){\makebox(0,0){$\simeq$}}
\put(640,900){\makebox(0,0){$\simeq$}}
\put(160,1210){\line(0,-1){1100}}
\put(159,1210){\line(1,0){195}}
\put(159,110){\vector(1,0){195}}
\put(60,650){\makebox(0,0){$\alpha$}}
\end{picture}
\end{center}%

Observe that the rows of the above diagram are characteristic homomorphisms of
\textbf{ }FS-Lie algebroids. Since $F_{0}^{L},\,F_{1}^{L}:L\rightarrow
T\mathbb{R}\times L$ are homotopic homomorphisms, then $F_{0}^{L\,\#}%
=F_{1}^{L\,\#}$. To prove the homotopy independence of the exotic
characteristic homomorphism (in the sense of Definition \ref{defhomotop},
i.e., the independence of a class of homotopic subalgebroids), it is
sufficient to show that $F_{0}^{L\,\#},\,F_{1}^{L\,\#}$ are isomorphisms.

We shall see below that$\;F_{t}^{L}$\ are homotopically equivalent. Take the
projection $\pi:T\mathbb{R}\times L\rightarrow L$ (over $\operatorname{pr}%
_{2}:\mathbb{R}\times M\rightarrow M$). Of course, $\pi$ is a homomorphism of
Lie algebroids. Note that $F_{t_{o}}^{L}\circ\pi=\left(  \hat{t}_{o}\right)
_{\ast}\times\operatorname{id}_{L}$, where $\hat{t}_{o}:\mathbb{R\rightarrow
R}$ is defined by $t\mapsto t_{o}$. We take $\tau:\mathbb{R\times R\rightarrow
R}$,\ $\left(  s,t\right)  \mapsto t_{o}+s\,\left(  t-t_{o}\right)  $. Since
the differential $f_{\ast}:TM\rightarrow TN$ of any smooth mapping
$f:M\rightarrow N$ is a homomorphism of Lie algebroids \cite{K4}, we obtain
that the map $\tau_{\ast}:T\left(  \mathbb{R\times R}\right)
=T\mathbb{R\times}T\mathbb{R\rightarrow}T\mathbb{R}$ is a homomorphism of Lie
algebroids. We put
\begin{align*}
H  &  :T\mathbb{R\times}\left(  T\mathbb{R}\times L\right)  \longrightarrow
T\mathbb{R}\times L,\\
H  &  =\tau_{\ast}\times\operatorname{id}_{L}.
\end{align*}
Since
\begin{align*}
H\left(  \theta_{0},\cdot,\cdot\right)   &  =\tau\left(  0,\cdot\right)
_{\ast}\times\operatorname{id}_{L}=\left(  \hat{t}_{o}\right)  _{\ast}%
\times\operatorname{id}_{L}=F_{t_{o}}^{L}\circ\pi,\\
H\left(  \theta_{1},\cdot,\cdot\right)   &  =\tau\left(  \cdot,1\right)
_{\ast}\times\operatorname{id}_{L}=\operatorname{id}_{T\mathbb{R\times}L},
\end{align*}
$H$ is a homotopy joining $F_{t_{o}}^{L}\circ\pi$ with $\operatorname{id}%
_{T\mathbb{R\times}L}$,\ i.e. $F_{t_{o}}^{L}\circ\pi\sim\operatorname{id}%
_{T\mathbb{R\times}L}$. Evidently, $\pi\circ F_{t_{o}}^{L}=\operatorname{id}%
_{L}$. Therefore, $F_{t}^{L}$ are isomorphisms.

These facts lead us to the following result:

\begin{theorem}
[The Rigidity Theorem]\label{TheRigidityTheorem}If $B_{0},\,B_{1}\subset A$
are homotopic subalgebroids of $A$ and $\nabla:L\rightarrow A$ is a flat
$L$-connection in $A$, characteristic homomorphisms $\Delta_{\left(
A,B_{0},\nabla\right)  \#}:\mathsf{H}^{\bullet}\left(  {{\pmb{g},B}}%
_{0}\right)  \rightarrow\mathsf{H}^{\bullet}\left(  L\right)  $ and
$\Delta_{\left(  A,B_{1},\nabla\right)  \#}:\mathsf{H}^{\bullet}\left(
{{\pmb{g},B}}_{1}\right)  \rightarrow H_{L}\left(  M\right)  $ are equivalent
in the sense that there exists an isomorphism of algebras
\[
\alpha:\mathsf{H}^{\bullet}\left(  {{\pmb{g},B}}_{0}\right)  \overset{\simeq
}{\longrightarrow}\mathsf{H}^{\bullet}\left(  {{\pmb{g},B}}_{1}\right)
\]
such that
\[
\Delta_{\left(  A,B_{1},\nabla\right)  \#}\circ\alpha=\Delta_{\left(
A,B_{0},\nabla\right)  \#}.
\]
In particular, $\Delta_{\left(  A,B_{1}\right)  \#}\circ\alpha=\Delta_{\left(
A,B_{0}\right)  \#}$.
\end{theorem}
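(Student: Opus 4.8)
The plan is to treat the statement as a formal diagram chase in the large commutative diagram assembled just above, since every analytic ingredient it needs has already been supplied. First I would fix the comparison isomorphism. Because it is recorded there (following \cite{K5}) that both vertical maps $F_0^{+\,\#}$ and $F_1^{+\,\#}$ are \emph{isomorphisms} of algebras, I set
\[
\alpha:=F_1^{+\,\#}\circ\bigl(F_0^{+\,\#}\bigr)^{-1}\colon\ \mathsf{H}^{\bullet}({\pmb{g}},B_0)\longrightarrow\mathsf{H}^{\bullet}({\pmb{g}},B_1),
\]
which is automatically an isomorphism, as claimed.

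Next I would abbreviate the middle row by
\[
\Phi:=\bigl(\operatorname{id}_{T\mathbb{R}}\times\nabla\bigr)^{\#}\circ\Delta_{(T\mathbb{R}\times A,B)\#},
\]
the characteristic homomorphism of the FS-Lie algebroid $(T\mathbb{R}\times A,B,\operatorname{id}_{T\mathbb{R}}\times\nabla)$. Commutativity of the two upper squares---the left one from the functoriality of $\Delta_{(\cdot)\#}$ (Theorem \ref{functorof0}), the right one from the naturality of the pullback expressed by (\ref{deltacohomology})---lets me rewrite the top row $\Delta_{(A,B_0,\nabla)\#}=\nabla^{\#}\circ\Delta_{0\#}$. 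Indeed $\Delta_{0\#}=F_0^{A\,\#}\circ\Delta_{(T\mathbb{R}\times A,B)\#}\circ(F_0^{+\,\#})^{-1}$, and then $\nabla^{\#}\circ F_0^{A\,\#}=F_0^{L\,\#}\circ(\operatorname{id}_{T\mathbb{R}}\times\nabla)^{\#}$ gives
\[
\Delta_{(A,B_0,\nabla)\#}=F_0^{L\,\#}\circ\Phi\circ\bigl(F_0^{+\,\#}\bigr)^{-1}.
\]
The identical computation through the two lower squares yields
\[
\Delta_{(A,B_1,\nabla)\#}=F_1^{L\,\#}\circ\Phi\circ\bigl(F_1^{+\,\#}\bigr)^{-1}.
\]

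To finish I would compose the last identity with $\alpha$ on the right; the factor $(F_1^{+\,\#})^{-1}\circ F_1^{+\,\#}$ collapses, leaving $\Delta_{(A,B_1,\nabla)\#}\circ\alpha=F_1^{L\,\#}\circ\Phi\circ(F_0^{+\,\#})^{-1}$. The proof then closes on the one genuinely nonformal fact already established above: since $F_0^{L}$ and $F_1^{L}$ are homotopic homomorphisms of Lie algebroids, the induced maps coincide, $F_0^{L\,\#}=F_1^{L\,\#}$. Substituting this turns the right-hand side into $F_0^{L\,\#}\circ\Phi\circ(F_0^{+\,\#})^{-1}=\Delta_{(A,B_0,\nabla)\#}$, which is the asserted equality. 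The concluding ``in particular'' clause is the special case $L=A$, $\nabla=\operatorname{id}_A$ (so that $\nabla^{\#}=\operatorname{id}$ and $F_t^{L}=F_t^{A}$); equivalently, it follows from the general identity by factoring out $\nabla^{\#}$ via (\ref{deltacohomology}).

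I expect no serious obstacle: the analytic heart of the matter---invertibility of the $F_t^{+\,\#}$ and the homotopy invariance giving $F_0^{L\,\#}=F_1^{L\,\#}$---is exactly what was proved before the statement, so what remains is bookkeeping. The only points demanding care are keeping the vertical arrows oriented correctly (the $F_0$-maps run upward into the $B_0$-row, the $F_1$-maps downward into the $B_1$-row) and, if one wishes to record it, observing that $\alpha$ does not depend on the particular subalgebroid $B$ joining $B_0$ to $B_1$, which can be read off the same diagram.
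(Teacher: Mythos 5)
Your proposal is correct and follows essentially the same route as the paper: the paper's proof is precisely the commutative diagram assembled before the theorem, with $\alpha$ defined as the left-hand vertical composite $F_1^{+\,\#}\circ\bigl(F_0^{+\,\#}\bigr)^{-1}$ (an isomorphism by \cite{K5}), functoriality (Theorem \ref{functorof0}) giving the squares, and the homotopy $F_0^{L}\sim F_1^{L}$ giving $F_0^{L\,\#}=F_1^{L\,\#}$. Your observation that only the equality $F_0^{L\,\#}=F_1^{L\,\#}$ (not their invertibility) is needed for the stated identity is a minor, accurate streamlining of the paper's argument.
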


\begin{corollary}
Let $\mathfrak{f}$ be a vector bundle and $\mathcal{A}\left(  \mathfrak{f}%
\right)  $ its Lie algebroid. Two Lie subalgebroids $B_{0}=\mathcal{A}\left(
\mathfrak{f},\left\{  h_{0}\right\}  \right)  $, $B_{1}=\mathcal{A}\left(
\mathfrak{f},\left\{  h_{1}\right\}  \right)  $ of the Lie algebroid
$\mathcal{A}\left(  \mathfrak{f}\right)  $, corresponding to two Riemannian
metrics $h_{0}$, $h_{1}$, are homotopic Lie subalgebroids \emph{\cite{K5}}.
Therefore, according to the Rigidity Theorem \emph{\ref{TheRigidityTheorem}}
we conclude that $\Delta_{\left(  \mathcal{A}\left(  \mathfrak{f}\right)
,\mathcal{A}\left(  \mathfrak{f},\left\{  h_{0}\right\}  \right)  \right)
\#}=\Delta_{\left(  \mathcal{A}\left(  \mathfrak{f}\right)  ,\mathcal{A}%
\left(  \mathfrak{f},\left\{  h_{1}\right\}  \right)  \right)  \#}$, i.e. the
characteristic homomorphism for the pair $\left(  \mathcal{A}\left(
\mathfrak{f}\right)  ,\mathcal{A}\left(  \mathfrak{f},\left\{  h\right\}
\right)  \right)  $ is an intrinsic notion for $\mathcal{A}\left(
\mathfrak{f}\right)  $ not depending on the metric $h$.
\end{corollary}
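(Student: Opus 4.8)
The plan is to exhibit $B_0$ and $B_1$ as homotopic Lie subalgebroids in the sense of Definition \ref{defhomotop} and then to invoke the Rigidity Theorem \ref{TheRigidityTheorem} in the universal case $L=A=\mathcal{A}(\mathfrak{f})$, $\nabla=\operatorname{id}_A$.

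For the homotopy, the idea I would use is that the fibrewise Riemannian metrics on $\mathfrak{f}$ form a convex set: the segment $h_s=(1-s)\,h_0+s\,h_1$ is positive definite, hence again a metric, for every $s\in[0,1]$. Fixing a smooth reparametrisation $\rho:\mathbb{R}\rightarrow[0,1]$ with $\rho\equiv 0$ near $0$ and $\rho\equiv 1$ near $1$, the assignment $\tilde h_{(t,x)}:=(h_{\rho(t)})_x$ is a Riemannian metric on the pullback bundle $\operatorname{pr}_2^{\ast}\mathfrak{f}$ over $\mathbb{R}\times M$, where $\operatorname{pr}_2:\mathbb{R}\times M\rightarrow M$. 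Its orthonormal-frame reduction carries the Lie algebroid $B:=\mathcal{A}(\operatorname{pr}_2^{\ast}\mathfrak{f},\{\tilde h\})$, which I would realise as a subalgebroid of $T\mathbb{R}\times\mathcal{A}(\mathfrak{f})$ through the canonical identification $\mathcal{A}(\operatorname{pr}_2^{\ast}\mathfrak{f})\cong T\mathbb{R}\times\mathcal{A}(\mathfrak{f})$. Since $\tilde h$ specialises to $h_0$ along $\{0\}\times M$ and to $h_1$ along $\{1\}\times M$, the fibre of $B$ over $(t,x)$ for $t\in\{0,1\}$ contains $(\theta_t,\upsilon_x)$ exactly when $\upsilon_x\in B_{t|x}$, which is the requirement (\ref{homotopB}). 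This is precisely the construction recorded in \cite{K5}, where the correspondence between homotopic $H$-reductions of a principal bundle and homotopic subalgebroids guarantees that $B$ is a genuine Lie subalgebroid joining $B_0$ with $B_1$.

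With the joining subalgebroid $B$ in hand, the Rigidity Theorem \ref{TheRigidityTheorem}, applied to $A=\mathcal{A}(\mathfrak{f})$ with $\nabla=\operatorname{id}_A$, furnishes an isomorphism $\alpha:\mathsf{H}^{\bullet}({\pmb{g}},B_0)\overset{\simeq}{\longrightarrow}\mathsf{H}^{\bullet}({\pmb{g}},B_1)$ satisfying $\Delta_{(\mathcal{A}(\mathfrak{f}),B_1)\#}\circ\alpha=\Delta_{(\mathcal{A}(\mathfrak{f}),B_0)\#}$. To read this as genuine metric-independence rather than as equality only up to $\alpha$, I would identify both domains with one and the same metric-free algebra. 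Here ${\pmb{g}}=\operatorname{End}\mathfrak{f}$, while ${\pmb{h}}_i=\operatorname{Sk}_{h_i}\mathfrak{f}$ does depend on $h_i$; however the generators of $\mathsf{H}^{\bullet}({\pmb{g}},B_i)$ are represented by the trace forms $\widetilde y_{2k-1}$ (see \cite{B-K}), which arise from $\operatorname{GL}$-invariant polynomials and hence do not involve the metric. Checking that each $F_t^{+\,\#}$ sends these trace forms to the corresponding trace forms over $\mathbb{R}\times M$ then shows that $\alpha$ is exactly this canonical identification, so the two characteristic homomorphisms coincide as maps into $\mathsf{H}^{\bullet}(\mathcal{A}(\mathfrak{f}))$ and their common image is independent of $h$.

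The step I expect to cost the most effort is the verification, underlying \cite{K5}, that the smoothly varying orthonormal reductions of $\tilde h$ genuinely assemble into a single Lie subalgebroid $B\subset T\mathbb{R}\times\mathcal{A}(\mathfrak{f})$ obeying (\ref{homotopB}); granting this, together with the routine fact that pullback preserves trace forms, the corollary follows at once from Theorem \ref{TheRigidityTheorem}.
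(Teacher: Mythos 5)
Your proposal follows the same route as the paper: the paper's entire argument is to cite \cite{K5} for the fact that the two metric reductions give homotopic Lie subalgebroids (via exactly the convex-combination homotopy $h_s=(1-s)h_0+sh_1$ you describe) and then to invoke the Rigidity Theorem \ref{TheRigidityTheorem} with $L=A$, $\nabla=\operatorname{id}_A$. Your additional care in explaining why the conclusion can be read as a literal equality of the two characteristic homomorphisms --- by identifying the domains $\mathsf{H}^{\bullet}({\pmb{g}},B_0)$ and $\mathsf{H}^{\bullet}({\pmb{g}},B_1)$ through the metric-independent trace forms --- addresses a point the paper leaves implicit, and is correct.
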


\section{Particular Cases of the Universal Exotic Characteristic
Homomorphism\label{universal}}

\subsection{The Koszul Homomorphism\label{Koszul}}

In this section, we will consider the characteristic homomorphism
$\Delta_{\left(  \mathfrak{g},\mathfrak{h}\right)  \#}$ for a pair of Lie
algebras $\left(  \mathfrak{g},\mathfrak{h}\right)  $, $\mathfrak{h\subset g}%
$, and give a class of such pairs for which $\Delta_{\left(  \mathfrak{g}%
,\mathfrak{h}\right)  \#}$ is a monomorphism.

An arbitrary Lie algebra is a Lie algebroid over a point with the zero map as
an anchor. Considering the homomorphism of pairs of Lie algebras $\left(
\operatorname*{id}_{\mathfrak{g}},0\right)  :\left(  \mathfrak{g},0\right)
\rightarrow\left(  \mathfrak{g},\mathfrak{h}\right)  ,$ $\mathfrak{h}%
\subset\mathfrak{g}$, the functoriality property gives that
\[
\Delta_{\left(  \mathfrak{g},\mathfrak{h}\right)  \#}=\Delta_{\left(
\mathfrak{g},0\right)  \#}\circ\left(  \operatorname*{id}%
\nolimits_{\mathfrak{g}},0\right)  ^{+\#}=-\left(  \operatorname*{id}%
\nolimits_{\mathfrak{g}},0\right)  ^{+\#}:\mathsf{H}^{\bullet}\left(
\mathfrak{g,h}\right)  \rightarrow\mathsf{H}^{\bullet}\left(  \mathfrak{g}%
\right)
\]
is induced (in cohomology) by minus of the projection $s:\mathfrak{g}%
\rightarrow\mathfrak{g}/\emph{h}$, since $\Delta_{\left(  \mathfrak{g}%
,0\right)  \#}:\mathsf{H}^{\bullet}\left(  \mathfrak{g},0\right)
=\mathsf{H}^{\bullet}\left(  \mathfrak{g}\right)
\xrightarrow{{\left(-\operatorname*{id}\nolimits_{\mathfrak{g}}\right)_{\#}}}\mathsf{H}%
^{\bullet}\left(  \mathfrak{g}\right)  $. More precisely, if
$\operatorname*{k}:\left(  \bigwedge\mathfrak{g}^{\ast}\right)
_{i_{\mathfrak{h}}=0,\theta_{\mathfrak{h}}=0}\rightarrow\bigwedge
\mathfrak{g}^{\ast}$ denotes the inclusion from the basic subalgebra $\left(
\bigwedge\mathfrak{g}^{\ast}\right)  _{i_{\mathfrak{h}}=0,\theta
_{\mathfrak{h}}=0}$ (i.e. subalgebra of invariant and horizontal elements of
$\bigwedge\mathfrak{g}^{\ast}$ with respect to the Lie subalgebra
$\mathfrak{h}$) to $\bigwedge\mathfrak{g}^{\ast}$ (see \cite[p. 412]{GHV}),
then the secondary characteristic homomorphism $\Delta_{\left(  \mathfrak{g}%
,\mathfrak{h}\right)  \#}$ for the pair $\left(  \mathfrak{g},\mathfrak{h}%
\right)  $ can be written as a superposition%
\[
\begin{CD}
\Delta _{(\mathfrak{g},\mathfrak{h})\#}:\mathsf{H}^{\bullet}\left(  \mathfrak{g},\mathfrak{h}\right)
@> \ {\left(  -s \right) ^{\#}} >\cong>    \mathsf{H}^{\bullet}\left(  \mathfrak{g}/\mathfrak{h}\right)
@> \ {\operatorname{k}}^{\#} >>    \mathsf{H}^{\bullet}\left(  \mathfrak{g}\right),
\end{CD}
\]
where $\mathsf{H}^{\bullet}\left(  \mathfrak{g}/\mathfrak{h}\right)  $ denotes
the cohomology algebra $\mathsf{H}^{\bullet}\left(  \left(  \bigwedge
\mathfrak{g}^{\ast}\right)  _{i_{\mathfrak{h}}=0,\theta_{\mathfrak{h}}%
=0},d_{\mathfrak{g}}\right)  $.

\begin{example}
\label{examplereductivepair}\emph{Let }$\left(  \mathfrak{g},\mathfrak{h}%
\right)  $\emph{ be a reductive pair of Lie algebras (}$\mathfrak{h}%
\subset\mathfrak{g}$\emph{), }$s:\mathfrak{g}\rightarrow\mathfrak{g}%
/\mathfrak{h}$\emph{ } \emph{the canonical projection. Theorems IX and X from
\cite[sections 10.18, 10.19]{GHV} yield that }$\operatorname*{k}^{\#}$\emph{
is injective if and only if }$\mathsf{H}^{\bullet}\left(  \mathfrak{g}%
/\mathfrak{h}\right)  $\emph{ is generated by }$1$\emph{ and odd-degree
elements. Therefore, because of }$\left(  -s\right)  ^{\#}$ \emph{is an
isomorphism of algebras, it follows that }$\Delta_{\left(  \mathfrak{g}%
,\mathfrak{h}\right)  \#}$\emph{ is injective if and only if }$\mathsf{H}%
^{\bullet}\left(  \mathfrak{g},\mathfrak{h}\right)  $\emph{ is generated by
}$1$\emph{ and odd-degree elements. In a wide class of pairs of Lie algebras
}$\left(  \mathfrak{g},\mathfrak{h}\right)  $\emph{ such that }$\mathfrak{h}%
$\emph{ is reductive in }$\mathfrak{g}$\emph{, the homomorphism }%
$\operatorname*{k}^{\#}$\emph{ is injective if and only if }$\mathfrak{h}%
$\emph{ is noncohomologous to zero (briefly: n.c.z.) in }$\mathfrak{g}$\emph{
(i.e. if the homomorphism of algebras }$\mathsf{H}^{\bullet}\left(
\mathfrak{g}\right)  \rightarrow\mathsf{H}^{\bullet}\left(  \mathfrak{h}%
\right)  $\emph{ induced by the inclusion }$\mathfrak{h}\hookrightarrow
\mathfrak{g}$\emph{ is surjective). Tables I, II and III at the end of Section
XI \cite{GHV} contain many n.c.z. pairs, eg: }$\left(  \mathfrak{gl}\left(
n\right)  ,\mathfrak{so}\left(  n\right)  \right)  $\emph{ for odd }$n,$\emph{
}$\left(  \mathfrak{so}\left(  n,\mathbb{C}\right)  ,\mathfrak{so}\left(
k,\mathbb{C}\right)  \right)  $\emph{ for }$k<n,$\emph{ }$\left(
\mathfrak{so}\left(  2m+1\right)  ,\mathfrak{so}\left(  2k+1\right)  \right)
$\emph{ and }$\left(  \mathfrak{so}\left(  2m\right)  ,\mathfrak{so}\left(
2k+1\right)  \right)  $\emph{ for }$k<m$\emph{ and others.}
\end{example}

In view of the above, the examples below yield that the secondary
characteristic homomorphism for the reductive pair $\left(  \operatorname{End}%
\left(  V\right)  ,\operatorname{Sk}\left(  V\right)  \right)  $ of Lie
algebras is a monomorphism for any odd dimensional vector space $V$ and not a
monomorphism for even dimensional.

\begin{example}
[The pair of Lie algebras $\left(  \operatorname{End}\left(  V\right)
,\operatorname{Sk}\left(  V\right)  \right)  $]\label{ex}\emph{(a) Let }%
$V$\emph{ be a vector space of odd dimension, }$\dim V=2m-1$\emph{. Then }%
\[
\mathsf{H}^{\bullet}\hspace{-0.1cm}\left(  \operatorname{End}\hspace
{-0.1cm}\left(  V\right)  ,\operatorname{Sk}\hspace{-0.1cm}\left(  V\right)
\right)  \cong\mathsf{H}^{\bullet}\hspace{-0.1cm}\left(  \mathfrak{gl}\left(
2m-1,\mathbb{R}\right)  ,O\hspace{-0.1cm}\left(  2m-1\right)  \right)
\cong\bigwedge\hspace{-0.1cm}\left(  y_{1},y_{3},\ldots,y_{2m-1}\right)  ,
\]
\emph{where }$y_{2k-1}\in\mathsf{H}^{4k-3}\left(  \operatorname{End}\left(
V\right)  ,\operatorname{Sk}\left(  V\right)  \right)  $\emph{ are represented
by the multilinear trace forms (\cite{Godbillon}, \cite{K-T3}). We conclude
from the previous example that }$\Delta_{\left(  \operatorname{End}\left(
V\right)  ,\operatorname{Sk}\left(  V\right)  \right)  \#}$\emph{ is
injective}$.$\newline\emph{(b) In the case where }$V$ \emph{is an even
dimensional vector space (}$\dim V=2m$\emph{), we have \cite{Godbillon} }%
\[
\mathsf{H}^{\bullet}\hspace{-0.1cm}\left(  \operatorname{End}\hspace
{-0.1cm}\left(  V\right)  ,\operatorname{Sk}\hspace{-0.1cm}\left(  V\right)
\right)  \cong\mathsf{H}^{\bullet}\hspace{-0.1cm}\left(  \mathfrak{gl}%
\hspace{-0.1cm}\left(  2m,\mathbb{R}\right)  ,SO\hspace{-0.1cm}\left(
2m\right)  \right)  \cong\bigwedge\hspace{-0.1cm}\left(  y_{1},y_{3}%
,\ldots,y_{2m-1},y_{2m}\right)  ,
\]
\emph{where }$y_{2k-1}$\emph{ are the same as above and }$y_{2m}\in
\mathsf{H}^{2m}\left(  \operatorname{End}\left(  V\right)  ,\operatorname{Sk}%
\left(  V\right)  \right)  $\emph{ is a nonzero class determined by the
Pfaffian. For details concerning elements }$y_{2m}$ \emph{see \cite{B-K}.
Example \ref{examplereductivepair} shows that if }$\dim V$\emph{ is even}%
$,$\emph{ the homomorphism }$\Delta_{\left(  \operatorname{End}\left(
V\right)  ,\operatorname{Sk}\left(  V\right)  \right)  \#}$\emph{ is not a
monomorphism.}
\end{example}

\begin{example}
\label{exdirectproduct}\emph{Let} $\mathfrak{g}$, $\mathfrak{h}$ \emph{be Lie
algebras and} $\mathfrak{g}\oplus\mathfrak{h}$ \emph{their direct product.}
\emph{The characteristic homomorphism of the pair }$\left(  \mathfrak{g}%
\oplus\mathfrak{h},\mathfrak{h}\right)  $\emph{ is a monomorphism. It is equal
to }%
\begin{equation}
\Delta_{\left(  \mathfrak{g}\oplus\mathfrak{h},\mathfrak{h}\right)
\#}:\mathsf{H}^{\bullet}\left(  \mathfrak{g}\right)  \rightarrow
\mathsf{H}^{\bullet}\left(  \mathfrak{g}\right)  \otimes\mathsf{H}^{\bullet
}\left(  \mathfrak{h}\right)  ,\ \ \Delta_{\#\mathfrak{g}\oplus\mathfrak{h}%
,\mathfrak{h}}\left(  \left[  \Phi\right]  \right)  =[\left(  -1\right)
^{\left\vert \Phi\right\vert }\cdot\Phi]\otimes1.\nonumber
\end{equation}

\end{example}

\subsection{The Exotic Universal Characteristic Homomorphism of Principal
Fibre Subbundles\label{Exotic_for_principal_bundle}}

We recall briefly secondary (exotic) flat characteristic classes for flat
principal bundles \cite{K-T3} and its connection to the exotic characteristic
classes for a pair of a Lie algebroids of a suitable vector bundle and its
reduction \cite{B-K}.

Let $P$ be a $G$-principal fibre bundle on a smooth manifold $M$,
$\omega\subset TP$ a flat connection in $P$ and $P^{\prime}\subset P$ a
connected\ $H$-reduction, where $H\subset G$ is a closed Lie subgroup of $G$.
Let us consider Lie algebroids $\operatorname*{A}\left(  P\right)  ,$
$\operatorname*{A}\left(  P^{\prime}\right)  $ of the principal bundles
$P,P^{\prime}$, respectively, the induced flat connection $\omega
^{A}:TM\rightarrow\operatorname*{A}\left(  P\right)  $ in the Lie algebroid
$\operatorname*{A}\left(  P\right)  $, and the secondary characteristic
homomorphism%
\[
\Delta_{\left(  \operatorname*{A}\left(  P\right)  ,\operatorname*{A}\left(
P^{\prime}\right)  ,\omega^{A}\right)  \#}:\mathsf{H}^{\bullet}\left(
{{\pmb{g},}}\operatorname*{A}\left(  P^{\prime}\right)  \right)
\longrightarrow\mathsf{H}_{dR}^{\bullet}\left(  M\right)
\]
for the FS-Lie algebroid $\left(  \operatorname*{A}\left(  P\right)
,\operatorname*{A}\left(  P^{\prime}\right)  ,\omega^{A}\right)  $. Moreover,
let
\[
\Delta_{\left(  P,P^{\prime},\omega\right)  \#}:\mathsf{H}^{\bullet}\left(
\mathfrak{g},H\right)  \longrightarrow\mathsf{H}_{dR}^{\bullet}\left(
M\right)
\]
be the classical homomorphism on principal fibre bundles (F.~Kamber,
Ph.~Tondeur \cite{K-T3}), where $\mathsf{H}^{\bullet}(\mathfrak{g},H)$ is the
relative Lie algebra cohomology of $\left(  \mathfrak{g},H\right)  $ (see
\cite{K-T3}, \cite{Chev-Eil}). There exists an isomorphism of algebras
$\kappa:\mathsf{H}^{\bullet}\left(  \mathfrak{g}{,H}\right)  \overset{\simeq
}{\longrightarrow}\mathsf{H}^{\bullet}\left(  {{\pmb{g},}}\operatorname*{A}%
\left(  P^{\prime}\right)  \right)  $ such that%
\begin{equation}
\Delta_{\left(  \operatorname*{A}\left(  P\right)  ,\operatorname*{A}\left(
P^{\prime}\right)  ,\omega^{A}\right)  \#}\circ\kappa=\Delta_{\left(
P,P^{\prime},\omega\right)  \#} \label{equival}%
\end{equation}
(see \cite[Theorem 6.1]{K5}). Hence, their characteristic classes are
identical. In \cite{B-K} we showed that the homomorphism
\[
\Delta_{\left(  P,P^{\prime}\right)  \#}:=\Delta_{\left(  \operatorname*{A}%
\left(  P\right)  ,\operatorname*{A}\left(  P^{\prime}\right)  \right)
\#}\circ\kappa:\mathsf{H}^{\bullet}\left(  \mathfrak{g},H\right)
\longrightarrow\mathsf{H}^{\bullet}\left(  \operatorname*{A}\left(  P\right)
\right)  \longrightarrow\mathsf{H}_{dR}^{r\bullet}\left(  P\right)
\]
factorizes $\Delta_{\left(  P,P^{\prime},\omega\right)  \#}$ for any flat
connection $\omega$ in $P$, i.e. the following diagram commutes
\begin{center}
\settriparms[-1`1`1;]
\Atriangle[\mathsf{H}^{\bullet}_{dR}(P)`\mathsf{H}^{\bullet}({\mathfrak{g}%
},H)`\mathsf{H}^{\bullet}_{dR}(M),;{\Delta_{(P,P^{\prime})\#}}`
{\omega}^{\#}`{\Delta_{(P,P^{\prime},\omega)\#}}]
\end{center}
where $\omega^{\#}$ on the level of right-invariant differential forms
${\Omega}^{r}\left(  P\right)  $ is given as the pullback of differential
forms. In particular, if $G$ is a compact, connected Lie group and $P^{\prime
}$ is a connected $H$-reduction in a $G$-principal bundle $P$, $H\subset G$,
then there exists a homomorphism of algebras%
\[
\Delta_{\left(  P,P^{\prime}\right)  \#}:\mathsf{H}^{\bullet}\left(
\mathfrak{g},H\right)  \longrightarrow\mathsf{H}_{dR}^{\bullet}\left(
P\right)
\]
(called a \textit{universal exotic characteristic homomorphism} for the pair
$P^{\prime}\subset P$) such that for arbitrary flat connection $\omega$ in
$P$, the characteristic homomorphism $\Delta_{\left(  P,P^{\prime}%
,\omega\right)  \#}:\mathsf{H}^{\bullet}\left(  \mathfrak{g},H\right)
\rightarrow\mathsf{H}_{dR}^{\bullet}\left(  M\right)  $ is factorized by
$\Delta_{\left(  P,P^{\prime}\right)  \#}$, i.e. the following diagram is
commutative
\begin{center}
\settriparms[-1`1`1;]
\Atriangle[\mathsf{H}^{\bullet}_{dR}(P)`\mathsf{H}^{\bullet}({\mathfrak{g}%
},H)`\mathsf{H}^{\bullet}_{dR}(M).;{\Delta_{(P,P^{\prime})\#}}`{\omega}%
^{\#}`{\Delta_{(P,P^{\prime},\omega)\#}}]
\end{center}%

\section{About a Monomorphicity of the Universal Exotic Characteristic
Homomorphism for a Pair of Transitive Lie Algebroids\label{ostatni}}

Consider a pair $\left(  A,B\right)  $ of transitive Lie algebroids on a
manifold $M$, $B\subset A$, $x\in M$, and a pair of adjoint Lie algebras
$\left(  {{\pmb{g}}}_{x},{{\pmb{h}}}_{x}\right)  $. Clearly, the inclusion
$\iota_{x}:\left(  {{\pmb{g}}}_{x},{{\pmb{h}}}_{x}\right)  \rightarrow\left(
A,B\right)  $ is a homomorphism of pairs of Lie algebroids over $\left\{
\ast\right\}  \hookrightarrow M$. Theorem \ref{functorof0} gives rise to the
commutative diagram
\begin{equation}%
\begin{split}%
\setsqparms[1`1`1`1;900`500]
\square[\mathsf{H}^{\bullet}({\pmb{g}},B)`\mathsf{H}^{\bullet}(A)`\mathsf
{H}^{\bullet}({\pmb{g}_{x}},\pmb{h}_{x})`\mathsf{H}^{\bullet}(\pmb{g}_{x}).;
{\Delta}_{(A,B)\#} `\iota_{x}^{+ \#}`\iota_{x}^{\#}`{\Delta}_{(\pmb{g}%
_{x},\pmb{h}_{x})\#}]%
\end{split}
\label{fu}%
\end{equation}
Obviously, if the left and lower homomorphisms are monomorphisms, then
$\Delta_{\left(  A,B\right)  \#}$ is a monomorphism as well. The homomorphism
$\iota_{x}^{+\#}$ is a monomorphism, if each invariant element $v\in\left(
\bigwedge\left(  {{\pmb{g}}}_{x}{{/\pmb{h}}}_{x}\right)  ^{\ast}\right)
^{\mathfrak{h}_{x}}$ can be extended to a global invariant cross-section of
the vector bundle $\bigwedge\left(  {{\pmb{g}/\pmb{h}}}\right)  ^{\ast}$. In
consequence, we obtain the following theorem linking the Koszul homomorphism
with exotic characteristic classes:

\begin{theorem}
\label{threductive_general}Let $\left(  A,B\right)  $ be a pair of transitive
Lie algebroids over a manifold $M$, $B\subset A$, and let $\left(  {{\pmb{g}}%
}_{x},{{\pmb{h}}}_{x}\right)  $ be a pair of adjoint Lie algebras at $x\in M$.
If each element of $\left(  \bigwedge\left(  {{\pmb{g}}}_{x}{{/\pmb{h}}}%
_{x}\right)  ^{\ast}\right)  ^{\mathfrak{h}_{x}}$ can be extended to an
invariant cross-section of $\bigwedge\left(  {{\pmb{g}/\pmb{h}}}\right)
^{\ast}$ and the Koszul homomorphism $\Delta_{\left(  {{\pmb{g}}}%
_{x},{{\pmb{h}}}_{x}\right)  \#}$ \ for the pair $\left(  {{\pmb{g}}}%
_{x},{{\pmb{h}}}_{x}\right)  $ is a monomorphism, then $\Delta_{\left(
A,B\right)  \#}$ is a monomorphism.
\end{theorem}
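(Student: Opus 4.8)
The plan is to extract the conclusion from the commutative square~(\ref{fu}) supplied by Theorem~\ref{functorof0} by a one-line diagram chase, after checking that two of its edges are monomorphisms. Abbreviate the four induced maps as $F=\Delta_{\left(A,B\right)\#}$ (top), $f=\iota_x^{+\#}$ (left), $g=\Delta_{\left({{\pmb{g}}}_x,{{\pmb{h}}}_x\right)\#}$ (bottom, the Koszul homomorphism) and $r=\iota_x^{\#}$ (right). Commutativity reads $r\circ F=g\circ f$. If $f$ and $g$ are both injective then $g\circ f$ is injective, hence so is $r\circ F$; and any map that becomes injective after post-composition is itself injective, so $F=\Delta_{\left(A,B\right)\#}$ is a monomorphism. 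This is exactly the elementary observation recorded immediately before the statement, so the whole content of the theorem lies in verifying that the two hypotheses make $f$ and $g$ monomorphisms.

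The bottom edge $g$ is a monomorphism by assumption, so the only real work concerns the left edge $f=\iota_x^{+\#}$. Here I would show that, under the extension hypothesis, the underlying cochain map $\iota_x^{+\ast}$ --- which evaluates an invariant cross-section $\Psi\in\left(\Gamma\left(\bigwedge\left({{\pmb{g}/\pmb{h}}}\right)^{\ast}\right)\right)^{\Gamma\left(B\right)}$ at $x$ to produce $\Psi_x\in\left(\bigwedge\left({{\pmb{g}}}_x{{/\pmb{h}}}_x\right)^{\ast}\right)^{{{\pmb{h}}}_x}$ --- is an isomorphism of complexes. Surjectivity is precisely the extension assumption. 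Injectivity is where transitivity enters: since $\#_B$ is surjective, the invariance condition cutting out $\left(\Gamma\left(\bigwedge\left({{\pmb{g}/\pmb{h}}}\right)^{\ast}\right)\right)^{\Gamma\left(B\right)}$ is a flat first-order system for the representation $\operatorname*{ad}\nolimits_{B,{{\pmb{h}}}}$, whose solutions are determined by parallel transport from their value at one point; hence $\Psi_x=0$ forces $\Psi=0$. This is the same mechanism --- invariant cross-sections as global solutions of a first-order system --- invoked in the homotopy-invariance discussion above and in \cite{K5}, \cite{K6}. Finally, because $\bar{\delta}$ is purely algebraic (it uses only the fibrewise bracket $[\![\cdot,\cdot]\!]$ and carries no anchor term), $\iota_x^{+\ast}$ intertwines it with the Chevalley--Eilenberg differential computing $\mathsf{H}^{\bullet}\left({{\pmb{g}}}_x,{{\pmb{h}}}_x\right)$; thus $\iota_x^{+\ast}$ is a chain isomorphism and $f=\iota_x^{+\#}$ is in particular a monomorphism.

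The main obstacle is precisely this injectivity of $\iota_x^{+\ast}$ on cochains: one must be careful that ``invariant'' means annihilated by $\operatorname*{ad}\nolimits_{B,{{\pmb{h}}}}$ and invoke the uniqueness of the global invariant sections of a transitive algebroid over a connected base, rather than merely their existence. Granting this, no further computation is needed: the diagram chase of the first paragraph closes the argument, and one sees in passing that the extension hypothesis in fact upgrades $\iota_x^{+\#}$ from a monomorphism to an isomorphism.
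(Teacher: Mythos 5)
Your argument is correct and follows the same route as the paper: the paper's proof is precisely the diagram chase on the commutative square~(\ref{fu}) together with the observation that the extension hypothesis makes $\iota_{x}^{+\#}$ a monomorphism. You in fact supply slightly more detail than the paper, which leaves implicit the injectivity of the evaluation map on invariant cross-sections (an invariant section of a transitive Lie algebroid over a connected base being determined by its value at one point, via the first-order system cited from \cite{K5}, \cite{K6}); making that explicit is the right thing to do and does not change the approach.
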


The assumptions of the above theorem hold for integrable Lie algebroids $A$
and $B$ ($B\subset A$), i.e. if $A=\operatorname*{A}\left(  P\right)  $ for
some principal $G$-bundle $P$ and $B=\operatorname*{A}\left(  P^{\prime
}\right)  $ for some reduction $P^{\prime}$ of $P$ with connected structural
Lie group $H\subset G$ (remark: on account of Theorem 1.1 in \cite{exp} for
any transitive Lie subalgebroid $B\subset\operatorname*{A}\left(  P\right)  $
there exists a connected reduction $P^{\prime}$ of $P$ having $B$ as its Lie
algebroid, i.e. $B=\operatorname*{A}\left(  P^{\prime}\right)  $, but, in
general, the structural Lie group of $P^{\prime}$ may be not connected). Let
$\mathfrak{g}$ and $\mathfrak{h}$ denote the Lie algebras of $G$ and $H$,
respectively. The representation $\operatorname*{ad}_{B,{{\pmb{h}}}}$ is
integrable: it is a differential of the representation $\operatorname*{Ad}%
_{P^{\prime},{{\pmb{h}}}}:P^{\prime}\rightarrow L\left(  {{\pmb{g}/\pmb{h}}%
}\right)  $ of the principal fibre bundle $P^{\prime}$ defined by
$z\mapsto\left[  \hat{z}\right]  $, see \cite[p. 218]{K5}. We recall that for
each $z\in P^{\prime}$, the isomorphism $\hat{z}:\mathfrak{g}\rightarrow
\pmb{g}_{x},$ $v\mapsto\left[  \left(  A_{z}\right)  _{\ast e}v\right]  $
($A_{z}:G\rightarrow P,$ $a\mapsto za$) maps $\mathfrak{h}$ onto $\pmb{h}_{x}$
(see \cite[Sec. 5.1]{K1}) and determines an isomorphism $\left[  \hat
{z}\right]  :\mathfrak{g}/\mathfrak{h}\rightarrow\pmb{g}_{x}/\pmb{h}_{x}$.
Therefore (see also (\cite[Prop. 5.5.2-3]{K1})), we have a natural isomorphism%
\[
\kappa:\left(  \bigwedge\left(  {{\pmb{g}}}_{x}{{/\pmb{h}}}_{x}\right)
^{\ast}\right)  ^{H}\overset{\cong}{\longrightarrow}\left(  \Gamma\left(
\bigwedge\left(  {{\pmb{g}/\pmb{h}}}\right)  ^{\ast}\right)  \right)
^{\Gamma\left(  B\right)  }%
\]
and because of the connectedness of $H$, $\hspace{-0.05cm}\left(
\bigwedge\left(  {{\pmb{g}}}_{x}{{/\pmb{h}}}_{x}\right)  ^{\ast}\right)
^{H}\hspace{-0.06cm}=\hspace{-0.05cm}\left(  \bigwedge\left(  {{\pmb{g}}}%
_{x}{{/\pmb{h}}}_{x}\right)  ^{\ast}\right)  ^{\mathfrak{h}_{x}}$, which gives
that $\iota_{x}^{+\#}$ is an isomorphism. In this way we obtain the following corollary:

\begin{corollary}
Let $\left(  A,B\right)  $ be a pair of Lie algebroids, $B\subset A$, where
$A$ is an integrable Lie algebroid via a principal fibre bundle $P$ and let
$\left(  {{\pmb{g}}}_{x},{{\pmb{h}}}_{x}\right)  $ be a pair of adjoint Lie
algebras at $x\in M$. If the structure Lie group of the connected reduction
$P^{\prime}$ of $P$ such that $\operatorname*{A}\left(  P^{\prime}\right)  =B$
is a connected Lie group and the Koszul homomorphism $\Delta_{\left(
{{\pmb{g}}}_{x},{{\pmb{h}}}_{x}\right)  \#}$ \ for the pair $\left(
{{\pmb{g}}}_{x},{{\pmb{h}}}_{x}\right)  $ is a monomorphism \emph{(}for
examples see Example \emph{\ref{examplereductivepair})}, then $\Delta_{\left(
A,B\right)  \#}$ is a monomorphism as well.
\end{corollary}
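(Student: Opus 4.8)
The plan is to derive this corollary as a direct specialization of Theorem \ref{threductive_general} to the integrable setting, so that the only task is to verify that the two hypotheses of that theorem hold under the present assumptions. Theorem \ref{threductive_general} requires, first, that every element of $\left(\bigwedge\left({{\pmb{g}}}_{x}{{/\pmb{h}}}_{x}\right)^{\ast}\right)^{\mathfrak{h}_{x}}$ extend to a $\Gamma(B)$-invariant cross-section of $\bigwedge\left({{\pmb{g}/\pmb{h}}}\right)^{\ast}$, and, second, that the Koszul homomorphism $\Delta_{\left({{\pmb{g}}}_{x},{{\pmb{h}}}_{x}\right)\#}$ be a monomorphism. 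The second hypothesis is assumed outright in the statement (concrete reductive pairs for which it holds are supplied by Example \ref{examplereductivepair}), so the real content is to establish the extension property.

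To this end I would use the principal-bundle description recalled just above the statement. Writing $A=\operatorname*{A}(P)$ and $B=\operatorname*{A}(P')$ for the connected reduction $P'$ with structural group $H\subset G$, the adjoint representation $\operatorname*{ad}\nolimits_{B,{{\pmb{h}}}}$ integrates to the representation $\operatorname*{Ad}\nolimits_{P',{{\pmb{h}}}}:P'\rightarrow L({{\pmb{g}/\pmb{h}}})$, $z\mapsto[\hat{z}]$, as in \cite[p.\ 218]{K5}. The frame isomorphisms $[\hat{z}]:\mathfrak{g}/\mathfrak{h}\xrightarrow{\cong}{{\pmb{g}}}_{x}{{/\pmb{h}}}_{x}$ transport invariant tensors and yield the natural isomorphism
\[
\kappa:\left(\bigwedge\left({{\pmb{g}}}_{x}{{/\pmb{h}}}_{x}\right)^{\ast}\right)^{H}\overset{\cong}{\longrightarrow}\left(\Gamma\left(\bigwedge\left({{\pmb{g}/\pmb{h}}}\right)^{\ast}\right)\right)^{\Gamma(B)}
\]
(cf.\ \cite[Prop.\ 5.5.2-3]{K1}), identifying $H$-invariant tensors in the single fibre over $x$ with global $\Gamma(B)$-invariant cross-sections.

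The key step is then to replace the $H$-invariance in the source of $\kappa$ by $\mathfrak{h}_{x}$-invariance. Since $H$ is connected, $\left(\bigwedge\left({{\pmb{g}}}_{x}{{/\pmb{h}}}_{x}\right)^{\ast}\right)^{H}=\left(\bigwedge\left({{\pmb{g}}}_{x}{{/\pmb{h}}}_{x}\right)^{\ast}\right)^{\mathfrak{h}_{x}}$, so the domain of $\kappa$ is exactly the space appearing in Theorem \ref{threductive_general}. Evaluation at $x$ provides the inverse of $\kappa$ (a $\Gamma(B)$-invariant cross-section of a transitive algebroid over a connected base is determined by its value at a single point), hence the restriction $\iota_{x}^{+\ast}$ is an isomorphism on invariant cochains and $\iota_{x}^{+\#}$ is an isomorphism on cohomology; in particular $\iota_{x}^{+\#}$ is surjective, which is precisely the required extension property. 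With both hypotheses of Theorem \ref{threductive_general} verified, that theorem delivers at once that $\Delta_{\left(A,B\right)\#}$ is a monomorphism.

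I expect the only delicate point to be this connectedness reduction, that is, the identification of $H$-invariants with $\mathfrak{h}_{x}$-invariants; it is exactly here that the assumption that the structural group of $P'$ be connected is indispensable, for a disconnected $H$ could cut the $H$-invariants strictly below the $\mathfrak{h}_{x}$-invariants and destroy the extension property. Everything else amounts to transporting the already-established isomorphism $\kappa$ through evaluation at $x$ and inserting the resulting identification into Theorem \ref{threductive_general}.
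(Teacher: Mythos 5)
Your proposal is correct and takes essentially the same route as the paper: the paper likewise reduces the corollary to Theorem \ref{threductive_general}, invokes the isomorphism $\kappa$ induced by the integrated representation $\operatorname*{Ad}\nolimits_{P^{\prime},{{\pmb{h}}}}$ (via \cite[Prop. 5.5.2-3]{K1}), and uses the connectedness of $H$ to identify $H$-invariants with $\mathfrak{h}_{x}$-invariants, concluding that $\iota_{x}^{+\#}$ is an isomorphism and hence that the extension hypothesis is satisfied. No gaps.
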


Let $\left(  A,B\right)  $ be a pair of transitive Lie algebroids over a
manifold $M$, $B\subset A$, for which the kernels ${{\pmb{g}}},{{\pmb{h}}}$ of
anchors are abelian Lie algebra bundles, and let $x\in M$. An example of the
mentioned Lie algebroid is the Lie algebroid $A\left(  G,H\right)  $ of a
nonclosed and connected Lie subgroup $H$ of $G$ (see \cite{K1}). The existence
of such nonintegrable Lie algebroids is shown in \cite{A-M}. Since the
homomorphism $\mathsf{H}^{\bullet}\left(  {{\pmb{g}}}_{x}\right)
=\bigwedge\left(  {{\pmb{g}}}_{x}\right)  ^{\ast}\rightarrow\bigwedge\left(
{{\pmb{h}}}_{x}\right)  ^{\ast}=\mathsf{H}^{\bullet}\left(  {{\pmb{h}}}%
_{x}\right)  $ induced by the inclusion ${{\pmb{g}}}_{x}\hookrightarrow
{{\pmb{h}}}_{x}$ is surjective and $\left(  {{\pmb{g}}}_{x},{{\pmb{h}}}%
_{x}\right)  $ is a reductive pair of Lie algebras, the Koszul homomorphism
$\Delta_{\left(  {{\pmb{g}}}_{x},{{\pmb{h}}}_{x}\right)  \#}$ is injective
(see Example \ref{examplereductivepair}). Hence, in view of Theorem
\ref{threductive_general} we obtain that if $\left(  A,B\right)  $ is a pair
of transitive Lie algebroids on a manifold $M$ such that kernels of their
anchors ${{\pmb{g}}},{{\pmb{h}}}$ are abelian Lie algebra bundles and each
element of $\left(  \bigwedge\left(  {{\pmb{g}}}_{x}{{/\pmb{h}}}_{x}\right)
^{\ast}\right)  ^{\mathfrak{h}_{x}}$ can be extended to an invariant
cross-section of $\bigwedge\left(  {{\pmb{g}/\pmb{h}}}\right)  ^{\ast}$ for
all $x\in M$, then $\Delta_{\left(  A,B\right)  \#}$ is a monomorphism.

\begin{remark}
\emph{An example of a nontrivial universal characteristic class determined by
the Pfaffian (see \cite{B-K}) shows that there exists a pair of transitive and
integrable Lie algebroids }$\left(  A,B\right)  $\emph{ for which the left
arrow in the diagram (\ref{fu}) describes an isomorphism, the bottom one is
not a monomorphism, however, the top one is a monomorphism.}
\end{remark}

\ \bigskip

Bogdan Balcerzak

e-mail: bogdan.balcerzak@p.lodz.pl

\bigskip

Jan Kubarski

e-mail: jan.kubarski@p.lodz.pl

\bigskip

Institute of Mathematics, Technical University of \L \'{o}d\'{z}

ul. W\'{o}lcza\'{n}ska 215, 90-924 \L \'{o}d\'{z}, Poland

\end{document}